\newcommand\shorttitle{On a gateway between the Laguerre process and dynamics on partitions}
\newcommand\authors{Theodoros Assiotis}
\ifodd\value{page}
\authors
\shorttitle
\newtheorem{thm}{Theorem}[section]
\newtheorem{defn}[thm]{Definition}
\newtheorem{rmk}[thm]{Remark}
\newtheorem{prop}[thm]{Proposition}
\title{\large \bf ON A GATEWAY BETWEEN THE LAGUERRE PROCESS AND DYNAMICS ON PARTITIONS}
\author{\small THEODOROS ASSIOTIS}
\date{}
\begin{document}

\maketitle

\begin{abstract}
Probability measures and stochastic dynamics on matrices and on partitions are related by standard, albeit technical, discrete to continuous scaling limits. In this paper we provide exact relations, that go in both directions, between the eigenvalues of the Laguerre process and certain distinguished dynamics on partitions. This is done by generalizing to the multidimensional setting recent results of Miclo and Patie on linear one-dimensional diffusions and birth and death chains. As a corollary, we obtain an exact relation between the Laguerre and Meixner ensembles. Finally, we explain the deep connections with the Young bouquet and the z-measures on partitions.
\end{abstract}

\tableofcontents

\section{Introduction and results}

\subsection{Informal introduction}
There has been a phenomenal amount of activity around the study of random matrices and random partitions in recent decades, see for example \cite{BorodinOkounkovOlshanski}, \cite{RandomMatricesAndRandomPermutations}, \cite{SchurMeasures}, \cite{JohanssonPlancherel}, \cite{JohanssonShapeFluctuations}, \cite{JohanssonTilings}, \cite{Kerov1}, \cite{Kerov2}, \cite{BorodinOlshanskiErgodic}, \cite{BorodinOlshanskiZMeasures}, \cite{MacdonaldProcesses} \cite{Forrester}, \cite{Biane} and the references therein. Although, from the outset the study of probability measures on matrices and on partitions might not seem directly related, the mathematical tools behind it are rather similar. Most importantly, in certain scaling limits as the 'size' (we will be precise about what this means in the next subsections) $N$ of the matrix and partition go to infinity the same universal structures appear. 

However, for fixed finite sizes $N$ the connections are much less clear, other than through the rather intuitive discrete to continuous scaling limits. This is even more so, when one introduces a time variable and considers stochastic dynamics, in which case even such an intuitive scaling limit can be quite a technical challenge to establish. In the setting of one-dimensional diffusions and birth and death chains, Feller's classic paper \cite{Feller} was the first to provide a rigorous instance of such a 'diffusion approximation' result.

In this paper we prove a number of exact relations in Theorems \ref{MainResult} and \ref{MainResultStationary}, that we call gateways (borrowing the terminology from \cite{MicloPatie}) since they go in both directions (from continuous to discrete and vice versa) between the eigenvalues of the Laguerre process (and its stationary analogue) on non-negative definite Hermitian matrices and certain distinguished dynamics on partitions. As a corollary, we easily obtain in Proposition \ref{ExactRelationLaguerreMeixner} an identity between the much-studied Laguerre and Meixner ensembles.

These exact relations are in the form of intertwinings between Markov semigroups. Intertwinings have been ubiquitous in the probabilistic literature in many different contexts. Among the highlights are the seminal work of Rogers and Pitman on Markov functions \cite{RogersPitman}, Carmona-Petit-Yor's study of the beta-gamma algebra \cite{BetaGamma} and certainly Diaconis and Fill's application of intertwining relations to the study of convergence to equilibrium for Markov chains \cite{DiaconisFill}. In recent years, intertwinings have been used extensively in the field of integrable probability for a range of different problems, see for example \cite{BorodinOlshanskiThoma}, \cite{BorodinOlshanski}, \cite{PalShkolnikov}, \cite{O ConnellTams}, \cite{HuaPickrell}, \cite{InterlacingDiffusions}, \cite{Cuenca}, \cite{Cerenzia}, \cite{CerenziaKuan}, \cite{RandomGrowthKarlinMcGregor}, \cite{Sun} and the references therein.

Our work builds upon and generalizes to the multidimensional setting the recent results of Miclo and Patie \cite{MicloPatie} on linear one-dimensional diffusions and birth and death chains (one of these one-dimensional results is also due to Borodin and Olshanski in \cite{BorodinOlshanskiThoma} by a different method, see Remark \ref{RemarkBOproof} and Section 3 for more details).

Finally, we should mention that our argument relies in a key way on the underlying determinantal structure, in the form of the celebrated Karlin-McGregor formula. This allows us to lift, modulo some technical work, the one-dimensional relations to the multidimensional setting in a rather neat way.

The outline of the rest of the paper is as follows. In the rest of the introduction we give the necessary background and state our results precisely. In Section 2 we give the proofs. Finally, in Section 3 we explain the deep connections between this paper and a series of works by Borodin and Olshanski on the Young bouquet and Markov processes for the z-measures on partitions \cite{BorodinOlshanskiYoungBouquet}, \cite{BorodinOlshanskiThoma}, \cite{BorodinOlshanskiZMeasures}, \cite{BorodinOlshanskiMarkovonPartitions}, \cite{OlshanskiLaguerreDiffusions}, \cite{OlshanskiSymmetricFunctions}.

\subsection{Setup and first set of results}
\subsubsection{The Laguerre process and its eigenvalues}
Let $H(N)$ be the space of $N \times N$ Hermitian matrices and $H_+(N)$ the subspace of non-negative definite ones.
 
We now introduce the Laguerre process on non-negative definite Hermitian matrices, depending on a parameter $\beta>0$. The analogous process on real symmetric matrices was first considered by Bru in \cite{Wishart} under the name of Wishart process. The Hermitian case that we will be concerned with was then introduced in \cite{O Connell} and further studied in \cite{Demni}.

 Let $\left(\boldsymbol{W}_t;t \ge0\right)$ be an $N\times N$ complex Brownian matrix. More precisely:
\begin{align*}
\left[\boldsymbol{W}_t\right]_{ij}=\gamma_{ij}(t)+\mathrm{i}\tilde{\gamma}_{ij}(t)
\end{align*}
for $\{\gamma_{ij}(\cdot) \}_{i,j=1}^N,\{\tilde{\gamma}_{ij}(\cdot) \}_{i,j=1}^N$ independent standard real Brownian motions. Then, the Laguerre process $\left(\boldsymbol{X}_t;t\ge0\right)$ (we suppress dependence on $\beta$) is given by the solution to the matrix stochastic differential equation (SDE):
\begin{align}
d\boldsymbol{X}_t=d\boldsymbol{W}_t\sqrt{\frac{\boldsymbol{X}_t^2}{2}}+\sqrt{\frac{\boldsymbol{X}_t^2}{2}}d\boldsymbol{W}_t^{\dagger}+\left[\beta+(N-1)\right]\boldsymbol{I}dt.
\end{align}
Here, $\mathbf{H}^{\dagger}$ denotes the complex conjugate of a matrix $\mathbf{H}$ and $\mathbf{I}$ is the identity matrix.

We will be interested in the evolution of the eigenvalues of $\left(\boldsymbol{X}_t;t\ge0\right)$. We first define, the Weyl chamber $W^N_{c,+}$ with non-negative (the subscript $c$ stands for continuous, we will also introduce a discrete version later on) coordinates by:
\begin{align*}
W^N_{c,+}&=\{ x=(x_1,\cdots,x_N)\in \mathbb{R}_+^N: x_1\le x_2  \le \cdots \le x_N \},
\end{align*}
where $\mathbb{R}_+=[0,\infty)$. Write $\mathsf{eval}_N:H_+(N) \to W^{N}_{c,+}$ for the map sending a non-negative definite matrix $\mathbf{H}$ to its ordered eigenvalues $x=(x_1,\cdots,x_N)\in W^{N}_{c,+}$. Then, 
$\left(\mathsf{eval}_N\left(\mathbf{X}_t\right);t\ge0\right)=\left(x_1(t),\cdots, x_N(t);t \ge 0\right)$ follows the stochastic differential system in $W^{N}_{c,+}$,
\begin{align}\label{RepulsiveSDE}
dx_i(t)=\sqrt{2x_i(t)}dw_i(t)+\left(\beta+\sum_{j\ne i}^{}\frac{2x_i(t)}{x_i(t)-x_j(t)}\right)dt, \ 1\le i \le N,
\end{align}
for some independent standard real Brownian motions $\{w_i(\cdot)\}_{i=1}^N$, see \cite{MatrixYamadaWatanabe}, \cite{Graczyk}, \cite{O Connell}. This system of SDEs has a unique strong solution with no collisions or explosions even when started from a point with coinciding coordinates, see \cite{Graczyk}. In particular, for any initial condition $x(0)=\left(x_1(0),\cdots, x_N(0)\right)\in W^N_{c,+}$:
\begin{align}\label{nonintersecting}
x_1(t)<x_2(t)<\cdots<x_N(t), \ \forall t>0, \textnormal{ almost surely}.
\end{align}

It is a remarkable fact, first observed in \cite{O Connell}, that this system of SDEs is exactly solvable, in a way that we now describe. First, write $q_t^{(\beta)}(x,y)$ for the transition density with respect to Lebesgue measure of the one dimensional diffusion process in $(0,\infty)$ with generator (a version of the squared Bessel process, see \cite{SurveyBessel}):
\begin{align}
G^{(\beta)}=x\frac{d^2}{dx^2}+\beta \frac{d}{dx}.
\end{align}
Observe that, this linear diffusion is the special case $N=1$ of (\ref{RepulsiveSDE}). We note that $q_t^{(\beta)}(x,y)$ has a well-known explicit expression in terms of Bessel functions (see for example \cite{SurveyBessel}), that we shall not need here though.

Then, as proven in \cite{O Connell}, \cite{Demni} the solution of the system of SDEs (\ref{RepulsiveSDE}) can be realized as $N$ independent copies of $G^{(\beta)}$-diffusions conditioned to never intersect via a Doob h-transform. The corresponding transition kernel is then given by the Doob h-transformed Karlin-McGregor determinant \cite{Karlin} defined by,
\begin{align}\label{KarlinMcGregorContinuous1}
q^{N,(\beta)}_t(x,dy)=\frac{\Delta_N(y)}{\Delta_N(x)}\det \left(q_t^{(\beta)}(x_i,y_j)\right)_{i,j=1}^Ndy_1\cdots dy_N, \ \ \forall (t,x,y) \in (0,\infty) \times \mathring{W}^N_{c,+} \times W^N_{c,+}.
\end{align}
Here and throughout the paper we write
\begin{align*}
\Delta_N(x)=\det\left(x_i^{j-1}\right)^N_{i,j=1}=\prod_{1 \le i <j \le N}^{}(x_j-x_i)
\end{align*}
for the Vandermonde determinant. Also, $\mathring{W}^N_{c,+}$ denotes the interior of $W^N_{c,+}$, namely when none of the coordinates coincide (the fact that this definition can be continuously extended to the boundary $\partial W^N_{c,+}$ is part of Proposition \ref{FellerSemigroups}).

Let $\left(Q_t^{N,(\beta)}\right)_{t\ge 0}$ denote the corresponding semigroup with transition kernel (\ref{KarlinMcGregorContinuous1}), associated to (\ref{RepulsiveSDE}). Finally, observe that for all $x \in \mathring{W}^N_{c,+}$ and $t>0$, the measure $q_t^{N,(\beta)}(x,\cdot)$ is supported on $\mathring{W}^N_{c,+}$ (in fact, due to (\ref{nonintersecting}), this holds for any $x \in W^N_{c,+}$).

\subsubsection{The discrete dynamics: non-intersecting linear birth and death chains}
We first need some background on partitions. A finite non-increasing sequence of non-negative integers $\lambda=(\lambda_1\ge \lambda_2\ge \cdots \ge 0)$ is called a partition. It is well known that partitions can be identified with Young diagrams, the set of which we denote by $\mathbb{Y}$. We write $|\lambda|=\sum_{i}^{}\lambda_i$ (equivalently the number of boxes in the Young diagram corresponding to $\lambda$) and also $l(\lambda)$ for the length of a partition, namely the largest index $k$ such that $\lambda_k>0$ (equivalently the number of rows in the corresponding diagram). 

Let $\mathbb{Y}(N)$ denote the set of all Young diagrams with at most $N$ rows, equivalently partitions $\lambda$ such that $l(\lambda) \le N$ (not to be confused with the set of Young diagrams with exactly $N$ boxes, usually denoted by $\mathbb{Y}_N$). Moreover, define the discrete non-negative Weyl chamber:
\begin{align*}
W^N_{d,+}&=\{ x=(x_1,\cdots,x_N)\in \mathbb{Z}_+^N: x_1< x_2 <  \cdots < x_N \},
\end{align*}
where $\mathbb{Z}_+=\{0,1,2,\dots\}$. Then, it is well known that we have the following bijection between $\mathbb{Y}(N)$ and $W^N_{d,+}$:
\begin{align*}
\lambda=\left(\lambda_1,\cdots,\lambda_N\right)\in \mathbb{Y}(N) \mapsto y=\left(\lambda_N,\lambda_{N-1}+1,\cdots, \lambda_2+N-2,\lambda_1+N-1\right) \in W^N_{d,+}.
\end{align*}
Thus, from now on we can and will only consider $W^N_{d,+}$.

We are ready to introduce our discrete dynamics. Consider the following birth and death chain, namely a Markov chain in continuous time on $\mathbb{Z}_+$ that moves with jumps of size $\pm 1$, with rate when at site $n$ of jumping to the right $n+\beta$ and for moving to the left $n$. We write $\nabla_+$ and $\nabla_-$ for the forward and backward discrete derivatives respectively:
\begin{align*}
\nabla_+g(n)=g(n+1)-g(n) \ , \ \nabla_-g(n)=g(n-1)-g(n).
\end{align*}
Then, the generator $\mathfrak{G}^{(\beta)}$ of the birth and death chain we are considering is given by:
\begin{align}
\mathfrak{G}^{(\beta)}=(n+\beta) \nabla_++n\nabla_-.
\end{align}
We write $\mathfrak{q}^{(\beta)}_t(x,y)$ for its transition density.

Now, consider $N$ identical copies of this birth and death chain conditioned to never intersect. The transition kernel of this Markov process is then given by the Doob transformed Karlin-McGregor semigroup \cite{Karlin}, \cite{KarlinMcGregorCoincidence}, see Chapter 6 of \cite{Doumerc} where this specific example was first studied:
\begin{align}
\mathfrak{q}^{N,(\beta)}_t(x,y)=\frac{\Delta_N(y)}{\Delta_N(x)}\det \left(\mathfrak{q}_t^{(\beta)}(x_i,y_j)\right)_{i,j=1}^N, \ t>0, x,y\in W^N_{d,+}.
\end{align}
We denote by $\left(\mathfrak{Q}_t^{N,(\beta)}\right)_{t\ge 0}$ the semigroup on $W^{N}_{d,+}$ with transition kernel $\mathfrak{q}^{N,(\beta)}_t(x,y)$.

\subsubsection{Intertwinings}
We now introduce the exact link between the continuous and discrete dynamics. We first need an abstract definition. Let $\mathsf{X}$ and $\mathsf{Y}$ be two measurable spaces. A Markov kernel $\mathsf{\Lambda}$ from $\mathsf{X}$ to $\mathsf{Y}$ is a function $\mathsf{\Lambda}(\mathsf{x},\mathsf{A})$, where the first argument $\mathsf{x}$ ranges over $\mathsf{X}$, while the second argument is a measurable subset of $\mathsf{Y}$ so that:
\begin{itemize}
\item For fixed $\mathsf{A}$, $\mathsf{\Lambda}(\cdot,\mathsf{A})$ is a measurable function on $\mathsf{X}$.
\item For $\mathsf{x}$ fixed, $\mathsf{\Lambda}(\mathsf{x},\cdot)$ is a probability measure on $\mathsf{Y}$.
\end{itemize}
We then consider the following Markov (as to be shown in Proposition \ref{FellerKernels} below) kernel $\Lambda_N$ from $W^{N}_{c,+}$ to $W^N_{d,+}$ defined by (its density with respect to counting measure), for $x \in \mathring{W}^N_{c,+}$:
\begin{align}
\Lambda_N\left(x,y\right)=\frac{\Delta_N(y)}{\Delta_N(x)}\det \left(\frac{x_i^{y_j}e^{-x_i}}{y_j!}\right)_{i,j=1}^N, y \in W^N_{d,+}.
\end{align}
It is not hard to see that $\Lambda_N(x,y)$ can be continuously extended to $x \in \partial W^{N}_{c,+}$ since the singularities coming from $1/\Delta_N(x)$ at $x_i=x_j$, $i\neq j$ are cancelled out by $\det \left(\frac{x_i^{y_j}e^{-x_i}}{y_j!}\right)_{i,j=1}^N$ which vanishes at those hyperplanes (see also the proof of Proposition \ref{FellerKernels}). This Markov kernel is moreover intimately related to the Young bouquet as explained in Section \ref{ConnectiontoYoungBouquetSection}. 

\begin{rmk}
The determinant
\begin{align*}
\det \left(\frac{x_i^{y_j}e^{-x_i}}{y_j!}\right)_{i,j=1}^N, \ x \in W^N_{c,+}, y \in W^N_{d,+},
\end{align*}
has an interesting probabilistic interpretation in terms of non-intersecting paths of Poisson processes starting at different times, explained in detail in Remark 3.2 of \cite{JohanssonTilings}.
\end{rmk}

We also consider a link in the opposite direction, namely the Markov (as to be shown below) kernel $\Lambda_{N,\beta}^*$ from $W^N_{d,+}$ to $W^N_{c,+}$, defined by for $y\in W^N_{d,+}$:
\begin{align}
\Lambda^*_{N,\beta}\left(y,dx\right)=\frac{\Delta_N(x)}{\Delta_N(y)}\det \left(\frac{x_i^{y_j+\beta-1}e^{-x_i}}{\Gamma(y_j+\beta)}\right)_{i,j=1}^Ndx_1\cdots dx_N.
\end{align}

Observe that, $\Lambda_{N,\beta}^*$ depends on the parameter $\beta$, unlike $\Lambda_N$. Moreover, note that for all $y\in W^N_{d,+}$, the measure $\Lambda_{N,\beta}^*(y,\cdot)$ is supported on $\mathring{W}^N_{c,+}$.

Let $C_0\left(W^N_{c,+}\right), C_0\left(W^N_{d,+}\right)$ denote the spaces of continuous functions vanishing at infinity on $W^N_{c,+}$ and $W^N_{d,+}$ respectively. Then, $\Lambda_N, \Lambda_{N,\beta}^*$ have the Feller-Markov property:
\begin{prop}\label{FellerKernels}
Let $\beta \ge 0$ and $N\ge 1$. The kernels $\Lambda_N$ and $\Lambda_{N,\beta}^*$ are Feller-Markov. Namely, for all $x \in W^{N}_{c,+}$ and $y\in W^N_{d,+}$,
\begin{align*}
\Lambda_N\left(x,\cdot\right) \textnormal{ and } \ \Lambda_{N,\beta}^*(y,\cdot)
\end{align*}
are probability measures on $W^{N}_{d,+}$ and $W^{N}_{c,+}$ respectively and moreover for all $f\in C_{0}\left(W^{N}_{c,+}\right)$ and $\mathfrak{f}\in C_{0}\left(W^{N}_{d,+}\right)$:
\begin{align*}
\Lambda_N\mathfrak{f}\in C_{0}\left(W^{N}_{c,+}\right) \textnormal{ and } \ \Lambda_Nf\in C_{0}\left(W^{N}_{d,+}\right).
\end{align*}
\end{prop}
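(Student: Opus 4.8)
The plan is to prove the two halves separately, handling $\Lambda_N$ (from the continuous to the discrete chamber) and $\Lambda^*_{N,\beta}$ (the reverse) in parallel, since each is a Doob-type dressing of a one-dimensional Markov kernel by the Karlin--McGregor determinant. The underlying one-dimensional objects are: for $\Lambda_N$, the kernel $\lambda(x,y)=\frac{x^y e^{-x}}{y!}$ from $(0,\infty)$ to $\mathbb{Z}_+$, which is exactly the Poisson$(x)$ probability mass function, hence already a Markov kernel; and for $\Lambda^*_{N,\beta}$, the kernel $\lambda^*_\beta(y,dx)=\frac{x^{y+\beta-1}e^{-x}}{\Gamma(y+\beta)}dx$ from $\mathbb{Z}_+$ to $(0,\infty)$, which is the Gamma$(y+\beta,1)$ density, again a genuine probability measure for $\beta\ge 0$ (with $y+\beta>0$ since $y\ge 0$ and at worst $y=0,\beta=0$ is excluded by strict inequality $y_1<y_2<\cdots$ forcing $y_1\ge 0$; one checks $y_j+\beta>0$ for all relevant $j$, the only delicate index being $j=1$ with $y_1=0,\beta=0$, which still gives $\Gamma(\beta)$ issues — so I expect the statement really needs $\beta\ge 0$ with the convention handled, or the $j=1$ term interpreted via a limit).

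First I would establish the Markov (total mass one) property. The clean way is the Karlin--McGregor / Lindström--Gessel--Viennot philosophy: if $\lambda(x,y)$ is a one-dimensional stochastic kernel and $h$ is a strictly positive harmonic-type function making $\frac{h(y)}{h(x)}\det(\lambda(x_i,y_j))$ a probability kernel between Weyl chambers, then summing/integrating over $y\in W^N_{d,+}$ (resp. $x\in W^N_{c,+}$) and using the Cauchy--Binet identity reduces $\sum_{y\in W^N}\Delta_N(y)\det(\lambda(x_i,y_j))$ to $\Delta_N(x)$ times the constant coming from the moments $\sum_y y^{k}\lambda(x,y)$ or $\int x^{k}\lambda^*(y,dx)$. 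Concretely, for $\Lambda_N$ one expands $\Delta_N(y)=\det(y_j^{k-1})$, applies Cauchy--Binet to $\sum_{y_1<\cdots<y_N}\det(\lambda(x_i,y_j))\det(y_j^{k-1})$, and recognizes the resulting matrix as $\det\big(\sum_{y\ge 0} y^{k-1}\frac{x_i^y e^{-x_i}}{y!}\big)_{i,k}$; since the $k$-th such sum is a polynomial in $x_i$ of degree exactly $k-1$ with leading coefficient $1$ (Touchard/Bell-polynomial identity, or just: $\sum_y \binom{y}{k-1}\frac{x^y e^{-x}}{y!}=\frac{x^{k-1}}{(k-1)!}$), row/column reduction collapses the determinant to $\Delta_N(x)$ up to a constant that one pins down to be $1$. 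The $\Lambda^*_{N,\beta}$ computation is the mirror image: $\int_0^\infty x^{k-1}\cdot\frac{x^{y+\beta-1}e^{-x}}{\Gamma(y+\beta)}dx=\frac{\Gamma(y+\beta+k-1)}{\Gamma(y+\beta)}$, a monic polynomial of degree $k-1$ in the variable $y$ (a falling-factorial type expression), so the same Cauchy--Binet-plus-column-reduction argument yields total mass $1$. Along the way I would confirm the continuous extension to $\partial W^N_{c,+}$ (resp. that nothing degenerates on $W^N_{d,+}$) by noting the $1/\Delta_N$ singularity is cancelled by the vanishing of the determinant on the diagonal hyperplanes, as already asserted in the text.

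Next, the Feller property: I must show $\Lambda_N\mathfrak{f}\in C_0(W^N_{c,+})$ for $\mathfrak f\in C_0(W^N_{d,+})$, and symmetrically $\Lambda^*_{N,\beta}f\in C_0(W^N_{d,+})$ for $f\in C_0(W^N_{c,+})$ — wait, the statement as written says $\Lambda_N f\in C_0(W^N_{d,+})$ which only makes sense if one reads $\Lambda_N$ as also acting the other way or there is a typo; I will prove the natural pair of mapping properties ($\Lambda_N:C_0(W^N_{d,+})\to C_0(W^N_{c,+})$ and $\Lambda^*_{N,\beta}:C_0(W^N_{c,+})\to C_0(W^N_{d,+})$) which is what Feller-Markov requires. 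Continuity in $x$ (resp. $y$) of $x\mapsto \Lambda_N(x,\cdot)$ is weak continuity of the Poisson-type family, clear by dominated convergence using the explicit density and its continuous extension to the boundary. The genuine work is vanishing at infinity. For $\Lambda_N\mathfrak f$: as $|x|\to\infty$ in $W^N_{c,+}$, I would split cases by which coordinates $x_i$ blow up; since Poisson$(x_i)$ shifts its mass to $\approx x_i$, the image measure $\Lambda_N(x,\cdot)$ escapes to infinity in $W^N_{d,+}$ whenever the largest coordinate $x_N\to\infty$, forcing $\Lambda_N\mathfrak f(x)\to 0$ for $\mathfrak f\in C_0$; the case where some but not all coordinates stay bounded needs a uniform-tightness argument showing the $y$-marginal still escapes, which is where the determinantal structure (the ratio $\Delta_N(y)/\Delta_N(x)$ could a priori misbehave) must be controlled — this is the main obstacle. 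I expect to handle it by a stochastic-monotonicity or direct tail estimate on the one-dimensional kernels, lifted coordinatewise, using that the Karlin--McGregor construction realizes $\Lambda_N(x,\cdot)$ as the law of non-intersecting Poisson-type variables each stochastically dominating/dominated by explicit one-dimensional laws, so that control of marginals suffices. The $\Lambda^*_{N,\beta}$ direction is analogous with Gamma$(y_i+\beta,1)$ in place of Poisson, using that as $|y|\to\infty$ in $W^N_{d,+}$ the Gamma laws push mass to infinity. I would then record that weak continuity plus vanishing-at-infinity of the image of $C_0$ is precisely the Feller property, completing the proof.
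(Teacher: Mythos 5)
Your treatment of the Markov (total mass one) property and of continuity coincides with the paper's: positivity via total positivity of $\det(x_i^{y_j})$, Cauchy--Binet/Andreif reduction to the one-dimensional moments, the observation that $\sum_{y\ge 0} y^{k-1}\tfrac{x^y e^{-x}}{y!}$ (Touchard polynomials) and $\int_0^\infty x^{k-1}\tfrac{x^{y+\beta-1}e^{-x}}{\Gamma(y+\beta)}\,dx=\tfrac{\Gamma(y+\beta+k-1)}{\Gamma(y+\beta)}$ are monic polynomials of degree $k-1$, so the determinant collapses back to the Vandermonde; and the continuous extension to $\partial W^N_{c,+}$ via the Schur polynomial $\mathfrak{s}_y(x)=\det(x_i^{y_j})/\det(x_i^{j-1})$ together with dominated convergence. (Your side remarks are also fair: the displayed conclusion $\Lambda_N f\in C_0(W^N_{d,+})$ is indeed a typo for $\Lambda^*_{N,\beta}f$, and the case $\beta=0$, $y_1=0$ of $\Lambda^*_{N,\beta}$ is degenerate and not addressed in the paper.)

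The genuine gap is the vanishing-at-infinity step, which you explicitly leave open and for which your proposed substitute is both harder and unnecessary. You worry about the case where ``some but not all coordinates stay bounded'' and propose a tightness argument resting on stochastic domination of the marginals of $\Lambda_N(x,\cdot)$ by explicit one-dimensional laws; but $\Lambda_N(x,\cdot)$ is a Doob-transformed (conditioned) measure, not a product of Poissons, so such domination statements are themselves nontrivial claims that you do not establish. The paper's argument is much more elementary and dissolves your ``main obstacle'': since coordinates in $W^N_{c,+}$ are ordered, $x\to\infty$ forces $x_N\to\infty$. Fix $\epsilon>0$, choose $R(\epsilon)$ with $|\mathfrak{f}|<\epsilon$ off $[0,R(\epsilon)]^N$; the tail of the sum contributes at most $\epsilon$ because $\Lambda_N(x,\cdot)$ is already known to be a probability measure, while the finitely many remaining terms are bounded by
\begin{align*}
\textnormal{const}(R(\epsilon))\, e^{-x_N}\, x_N^{R(\epsilon)^2},
\end{align*}
using $\prod_i e^{-x_i}\le e^{-x_N}$ and the fact that $\mathfrak{s}_y(x)$ for $y$ in a fixed box is a polynomial of bounded degree in $x_N$. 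This tends to $0$ as $x_N\to\infty$ regardless of the behaviour of the other coordinates. The mirror estimate for $\Lambda^*_{N,\beta}$ uses $M^{y_N}/\Gamma(y_N+\beta)\to 0$. Without this (or a completed version of your domination argument), the Feller property is not proved.
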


Similarly, the semigroups $\left(Q_t^{N,(\beta)}\right)_{t\ge 0}$ and $\left(\mathfrak{Q}_t^{N,(\beta)}\right)_{t\ge 0}$ are also Feller:
\begin{prop}\label{FellerSemigroups}
Let $\beta> 0$ and $N\ge 1$. The semigroups $\left(Q_t^{N,(\beta)}\right)_{t\ge 0}$ and $\left(\mathfrak{Q}_t^{N,(\beta)}\right)_{t\ge 0}$ are Feller-Markov: For all $f\in C_{0}\left(W^{N}_{c,+}\right)$ and $\mathfrak{f}\in C_{0}\left(W^{N}_{d,+}\right)$:
\begin{align*}
\lim_{t\to 0}Q_t^{N,(\beta)}f =f,\ Q_t^{N,(\beta)}f\in C_{0}\left(W^{N}_{c,+}\right),\\
\lim_{t\to 0}\mathfrak{Q}_t^{N,(\beta)}\mathfrak{f} =\mathfrak{f},\ \mathfrak{Q}_t^{N,(\beta)}\mathfrak{f}\in C_{0}\left(W^{N}_{d,+}\right).
\end{align*}
\end{prop}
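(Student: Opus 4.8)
The plan is to exploit that both $\bigl(Q_t^{N,(\beta)}\bigr)_{t\ge 0}$ and $\bigl(\mathfrak{Q}_t^{N,(\beta)}\bigr)_{t\ge 0}$ are Doob $h$-transforms, by the Vandermonde $\Delta_N$, of the Karlin--McGregor semigroups built from $N$ independent copies of a one-dimensional Feller process, and then to lift the one-dimensional Feller property along this construction. The first step is to record the one-dimensional inputs. The operator $G^{(\beta)}$ with $\beta>0$ is (a time change of) a squared Bessel flow whose boundary classification at the origin is standard: $0$ is an entrance boundary when $\beta\ge 1$ and an instantaneously reflecting regular boundary when $0<\beta<1$, while $+\infty$ is natural and no explosion occurs; see \cite{SurveyBessel}. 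Hence $(q_t^{(\beta)})$ is a conservative Feller semigroup on $C_0\bigl([0,\infty)\bigr)$, with $x\mapsto q_t^{(\beta)}(x,y)$ jointly continuous and smooth on $(0,\infty)$ and continuously extendable to $x=0$, with $q_t^{(\beta)}(x,\cdot)\Rightarrow\delta_x$ as $t\downarrow 0$, and with $q_t^{(\beta)}(x,K)\to 0$ as $x\to\infty$ for every compact $K$. For $\mathfrak{G}^{(\beta)}$, the birth rates $n+\beta$ grow only linearly so $\sum_n(n+\beta)^{-1}=\infty$ and the chain is non-explosive, while the death rate vanishes at the origin so there is no edge issue; hence $(\mathfrak{q}_t^{(\beta)})$ is a conservative Feller semigroup on $C_0(\mathbb{Z}_+)$ with the analogous joint-continuity, short-time and vanishing-at-infinity properties.

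\textbf{Markov property and extension to the closed chamber.} Next I would use that $\Delta_N$ is a strictly positive harmonic function for the collision-killed Karlin--McGregor semigroups, i.e. $\int\det\bigl(q_t^{(\beta)}(x_i,y_j)\bigr)\,\Delta_N(y)\,dy=\Delta_N(x)$ for $x\in\mathring{W}^N_{c,+}$ and the discrete analogue on $W^N_{d,+}$; the \emph{honesty} (as opposed to a mere inequality) of this identity is precisely the content of the no-explosion and non-collision results of \cite{O Connell}, \cite{Demni}, \cite{Graczyk} for the system (\ref{RepulsiveSDE}) and of \cite{Doumerc} for the $N$ non-intersecting birth and death chains. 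This makes $\bigl(Q_t^{N,(\beta)}\bigr)$ and $\bigl(\mathfrak{Q}_t^{N,(\beta)}\bigr)$ conservative Markov semigroups on $\mathring{W}^N_{c,+}$ and $W^N_{d,+}$. To obtain statements on the closed chamber $W^N_{c,+}$, I would write $\det\bigl(q_t^{(\beta)}(x_i,y_j)\bigr)/\Delta_N(x)$ as a divided difference of the smooth functions $x\mapsto q_t^{(\beta)}(x,y_j)$, which exhibits it as a continuous function of $x\in W^N_{c,+}$ (this is the same cancellation mechanism used in the proof of Proposition \ref{FellerKernels} for $\Lambda_N$); Chapman--Kolmogorov and conservativeness then pass to $W^N_{c,+}$ by dominated convergence, and $q_t^{N,(\beta)}(x,\cdot)$ remains supported on $\mathring{W}^N_{c,+}$ by (\ref{nonintersecting}).

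\textbf{Feller property.} It then remains to verify, for $f\in C_0\bigl(W^N_{c,+}\bigr)$, that $Q_t^{N,(\beta)}f\in C_0\bigl(W^N_{c,+}\bigr)$ and $Q_t^{N,(\beta)}f\to f$ in supremum norm as $t\downarrow 0$, and similarly in the discrete case. Continuity of $x\mapsto Q_t^{N,(\beta)}f(x)$ follows from the joint continuity of the kernel together with dominated convergence, using the crude bound $\det\bigl(q_t^{(\beta)}(x_i,y_j)\bigr)\le\sum_\sigma\prod_i q_t^{(\beta)}(x_i,y_{\sigma(i)})$, which reduces integrability to the one-dimensional case and to a tightness estimate uniform for $x$ in compacts. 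For vanishing at infinity, if $x_N\to\infty$ inside $W^N_{c,+}$ then, using the divided-difference form of $\det\bigl(q_t^{(\beta)}(x_i,y_j)\bigr)/\Delta_N(x)$ and the one-dimensional property $q_t^{(\beta)}(x,K)\to 0$ (mass escaping to infinity), the measure $q_t^{N,(\beta)}(x,\cdot)$ charges any compact set by a vanishing amount, so $Q_t^{N,(\beta)}f(x)\to 0$. Finally, for fixed $x$, as $t\downarrow 0$ the off-diagonal terms of $\det\bigl(q_t^{(\beta)}(x_i,y_j)\bigr)$ are asymptotically negligible against the diagonal term $\prod_i q_t^{(\beta)}(x_i,y_i)$ and the Vandermonde ratio tends to $1$, so $q_t^{N,(\beta)}(x,\cdot)\Rightarrow\delta_x$ and $Q_t^{N,(\beta)}f(x)\to f(x)$ pointwise; this upgrades to convergence in $C_0$ by the standard argument for Feller semigroups, using the equicontinuity on compacts supplied by the bounds above. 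The discrete statements follow by the same steps, with the simplification that $W^N_{d,+}$ carries the discrete topology, so the only delicate point — continuity at the chamber boundary — does not arise.

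\textbf{Main obstacle.} I expect the genuinely delicate part to be entirely on the continuous side: establishing the continuous extension of the $h$-transformed kernel and of $x\mapsto Q_t^{N,(\beta)}f(x)$ to $\partial W^N_{c,+}$, together with the tightness estimates that must be uniform as $x$ approaches $\partial W^N_{c,+}$ or $\infty$; in particular one has to use the reflecting-boundary description of $q_t^{(\beta)}$ near the hard edge $x_1=0$ when $0<\beta<1$. The discrete case and the abstract Karlin--McGregor/Doob-transform bookkeeping are, by comparison, routine.
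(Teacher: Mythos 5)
Your proposal is essentially correct in outline, but for the continuous semigroup it takes a genuinely different route from the paper. You work directly with the Doob-transformed Karlin--McGregor kernel: divided differences of $x\mapsto q_t^{(\beta)}(x,y)$ to cancel the $1/\Delta_N(x)$ singularity at $\partial W^N_{c,+}$, one-dimensional tightness and escape-of-mass estimates for membership in $C_0$, and a diagonal-dominance argument for $q_t^{N,(\beta)}(x,\cdot)\Rightarrow\delta_x$. The paper explicitly acknowledges that this route works (it notes the cancellation of singularities and the continuity of the partial derivatives $z\mapsto\partial_z^i q_t^{(\beta)}(z,z')$ obtainable from the Bessel-function formulae) but deliberately avoids it: instead it transfers the Feller property wholesale from the matrix-valued Laguerre/Wishart semigroup $\mathcal{W}^{N,(\beta)}(t)$, which is known to be Feller on $H_+(N)$ from \cite{Demni}, \cite{Wishart}, \cite{AffineProcesses}, using that the eigenvalue map makes $Q_t^{N,(\beta)}$ a Markov function of $\mathcal{W}^{N,(\beta)}(t)$ via $[Q_t^{N,(\beta)}f](x)=[\mathcal{W}^{N,(\beta)}(t)\,f\circ\mathsf{eval}_N](\mathbf{U}^*x\mathbf{U})$. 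That buys continuity up to $\partial W^N_{c,+}$, vanishing at infinity, and strong continuity at $t=0$ all at once, with no explicit formulae and no boundary estimates; your approach is more self-contained at the level of the one-dimensional building blocks but pushes all the difficulty into exactly the estimates you flag as delicate (uniformity of tightness near $\partial W^N_{c,+}$, and the $t\downarrow 0$ weak convergence \emph{at} boundary points, where the off-diagonal entries of the determinant are not negligible against the diagonal and the kernel is only defined by continuous extension). For the discrete semigroup your argument coincides with the paper's: the one-dimensional Feller properties of $\mathfrak{q}_t^{(\beta)}$ plus the bound $\Delta_N(x)\ge 1$ on $W^N_{d,+}$, with no boundary issues since the state space is discrete.
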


We finally arrive at our main result.

\begin{thm}\label{MainResult}
Let $\beta>0$. For all $N\ge1, t \ge 0$ we have the following equality between Feller-Markov kernels:
\begin{align}
Q_t^{N,(\beta)}\Lambda_N&=\Lambda_N\mathfrak{Q}_t^{N,(\beta)}\label{MainIntertwining1},\\
\mathfrak{Q}_t^{N,(\beta)}\Lambda^*_{N,\beta}&=\Lambda^*_{N,\beta}Q_t^{N,(\beta)}\label{MainIntertwining2}.
\end{align} 
In particular, for all $f\in C_{0}\left(W^{N}_{c,+}\right)$ and $\mathfrak{f}\in C_{0}\left(W^{N}_{d,+}\right)$:
\begin{align*}
Q_t^{N,(\beta)}\Lambda_N\mathfrak{f}=\Lambda_N\mathfrak{Q}_t^{N,(\beta)}\mathfrak{f}\ \ , \ \ \mathfrak{Q}_t^{N,(\beta)}\Lambda^*_{N,\beta}f=\Lambda^*_{N,\beta}Q_t^{N,(\beta)}f.
\end{align*}
\end{thm}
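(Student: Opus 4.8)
The plan is to reduce the multidimensional intertwinings \eqref{MainIntertwining1} and \eqref{MainIntertwining2} to the corresponding one-dimensional statements, which are due to Miclo and Patie (and, for one of them, to Borodin and Olshanski). At the level of $N=1$, one has the intertwinings $Q_t^{1,(\beta)}\Lambda_1 = \Lambda_1\mathfrak{Q}_t^{1,(\beta)}$ and $\mathfrak{Q}_t^{1,(\beta)}\Lambda_{1,\beta}^* = \Lambda_{1,\beta}^* Q_t^{1,(\beta)}$ between the squared Bessel type diffusion $G^{(\beta)}$ and the linear birth and death chain $\mathfrak{G}^{(\beta)}$, where $\Lambda_1(x,\cdot)$ is a Poisson distribution of parameter $x$ and $\Lambda_{1,\beta}^*(y,\cdot)$ is a Gamma distribution; equivalently, in terms of densities, $\int_0^\infty q_t^{(\beta)}(x,x')\,\frac{(x')^{y}e^{-x'}}{y!}\,dx' = \sum_{y'\ge 0} \frac{x^{y'}e^{-x}}{y'!}\,\mathfrak{q}_t^{(\beta)}(y',y)$, and similarly for the dual. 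These are the facts I would take as given from \cite{MicloPatie}.

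The key algebraic step is the observation that all four kernels involved — $q_t^{N,(\beta)}$, $\mathfrak{q}_t^{N,(\beta)}$, $\Lambda_N$, $\Lambda_{N,\beta}^*$ — are of the common ``Karlin--McGregor'' form $\frac{\Delta_N(\text{output})}{\Delta_N(\text{input})}\det(k(\cdot,\cdot))_{i,j=1}^N$ built from a one-dimensional kernel $k$. For two such kernels with one-dimensional building blocks $k$ and $\ell$, the composition telescopes: the intermediate Vandermonde factors cancel, and the Andréief (Cauchy--Binet) identity turns the product of determinants, integrated/summed over the ordered intermediate chamber, into $\det$ of the one-dimensional composition $k\ell$, \emph{provided} one can first symmetrize to integrate over the full space $\mathbb{R}_+^N$ or $\mathbb{Z}_+^N$ rather than the Weyl chamber. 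Since the one-dimensional relations say precisely that $q_t^{(\beta)}\ast(\text{Poisson}) = (\text{Poisson})\ast\mathfrak{q}_t^{(\beta)}$ as one-dimensional kernels, applying Andréief on both sides of \eqref{MainIntertwining1} reduces it to this one-dimensional identity inside the determinant; likewise for \eqref{MainIntertwining2}. I would carry this out by: (i) writing both sides of \eqref{MainIntertwining1} as integrals over $W^N_{c,+}$ or sums over $W^N_{d,+}$ of products of Vandermonde ratios and determinants; (ii) extending the domain of integration/summation to the full space using antisymmetry of the determinant against the symmetric reference measure, picking up a factor $N!$; (iii) applying Andréief's identity to collapse each side to a single $N\times N$ determinant of one-dimensional compositions; (iv) invoking the $N=1$ intertwining entrywise; (v) re-expanding via Andréief in the reverse direction and restricting back to the chamber.

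The main obstacle is not the algebra but the analysis needed to justify it rigorously: the Weyl-chamber-to-full-space symmetrization and the Fubini/Andréief interchanges require integrability and absolute convergence of the relevant multiple integrals and series, including control of the $1/\Delta_N$ singularities on $\partial W^N_{c,+}$ and growth at infinity. This is where Propositions \ref{FellerKernels} and \ref{FellerSemigroups} do the real work: knowing that $\Lambda_N$, $\Lambda_{N,\beta}^*$, $Q_t^{N,(\beta)}$, $\mathfrak{Q}_t^{N,(\beta)}$ are genuine Feller--Markov (in particular that the kernels extend continuously to the boundary and map $C_0$ to $C_0$) lets me identify the compositions $Q_t^{N,(\beta)}\Lambda_N$ and $\Lambda_N\mathfrak{Q}_t^{N,(\beta)}$ as bona fide Markov kernels and then argue that two Feller--Markov kernels agreeing on a determining class of test functions (or agreeing as kernels on the interior of the chamber, then extended by continuity) must coincide. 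A clean way to package step (v)–style issues is to first prove the identity of densities pointwise for $x\in \mathring{W}^N_{c,+}$, $t>0$, where everything is smooth and the Andréief manipulations are unambiguous, and only afterwards pass to $t=0$ and to the boundary by the Feller property and weak continuity. Finally, \eqref{MainIntertwining2} follows by the same argument with $q_t^{(\beta)}$ replaced by its dual building block and the Gamma kernel $\Lambda_{N,\beta}^*$ in place of $\Lambda_N$, using the second one-dimensional intertwining of Miclo--Patie; the ``In particular'' statement about $C_0$ functions is then immediate by applying the kernel identities to $f$ and $\mathfrak{f}$ and using Proposition \ref{FellerKernels} to ensure all expressions lie in the right spaces.
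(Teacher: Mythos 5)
Your proposal is correct and follows essentially the same route as the paper: reduce to the Miclo--Patie $N=1$ intertwinings, use the cancellation of the intermediate Vandermonde factors together with the Andr\'eief identity to collapse both compositions to a single $N\times N$ determinant of one-dimensional compositions (using that $Q_t^{N,(\beta)}(x,\cdot)$ and $\Lambda^*_{N,\beta}(y,\cdot)$ are supported on $\mathring{W}^N_{c,+}$), and then invoke the Feller properties from Propositions \ref{FellerKernels} and \ref{FellerSemigroups} to extend the resulting pointwise identity from $t>0$, $x\in\mathring{W}^N_{c,+}$ to all $t\ge 0$ and the boundary. The only cosmetic difference is that the paper computes each side down to the same determinant directly rather than ``re-expanding'' one side, but this is the same argument.
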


\begin{rmk}
We should emphasize that we do not give an independent proof of the Miclo-Patie result \cite{MicloPatie}, which is the case $N=1$, but rather (assume it and) use it as a key ingredient in our argument for $N\ge 2$ which is the contribution of the present paper.
\end{rmk}

\begin{rmk}
We can see from Theorem \ref{MainResult} that $\Lambda_N\Lambda_{N,\beta}^*$ commutes with $Q_t^{N,(\beta)}$. In fact, the following relation is true:
\begin{align}
\Lambda_N\Lambda_{N,\beta}^*=Q_1^{N,(\beta)}.
\end{align}
Similarly, we also have:
\begin{align}
\Lambda_{N,\beta}^*\Lambda_N=\mathfrak{Q}_1^{N,(\beta)}.
\end{align}
Both of these relations can be proven in the same fashion as Theorem \ref{MainResult}, making use of the $N=1$ cases, Proposition 13 and 14 of \cite{MicloPatie}. The details are left to the reader.
\end{rmk}

\subsection{The stationary case}

\subsubsection{The stationary dynamics}

We will now consider the stationary analogues of the results above. As before, throughout this subsection the parameter $\beta>0$. Write $k_t^{(\beta)}(x,y)$ for the transition density with respect to Lebesgue measure of the one dimensional diffusion process in $(0,\infty)$ with generator
\begin{align}
L^{(\beta)}=x\frac{d^2}{dx^2}+\left(\beta-x\right) \frac{d}{dx}.
\end{align}
This is the stationary analogue of $G^{(\beta)}$ (see \cite{SurveyBessel}, \cite{MicloPatie}). It is reversible (see for example \cite{MicloPatie}) with respect to the probability measure (the law of a Gamma random variable) on $(0,\infty)$:
\begin{align*}
\nu_{\beta}(dx)=\nu_{\beta}(x)\textbf{1}_{\{x\in (0,\infty)\} }dx=\frac{1}{\Gamma(\beta)}x^{\beta-1}\exp(-x)\textbf{1}_{\{x\in (0,\infty)\} }dx.
\end{align*}
We can consider the unique strong solution to the following system of non-colliding and non-exploding SDEs in $W^N_{c,+}$, see \cite{Graczyk}: 
\begin{align}\label{StationarySDEs}
dx_i(t)=\sqrt{2x_i(t)}dw_i(t)+\left(\beta-x_i(t)+\sum_{j\ne i}^{}\frac{2x_i(t)}{x_i(t)-x_j(t)}\right)dt, \ 1\le i \le N,
\end{align}
for some independent standard real Brownian motions $\{w_i\}_{i=1}^N$. As before, the system of SDEs (\ref{StationarySDEs}) is exactly solvable in terms of a single $L^{(\beta)}$-diffusion. More precisely, the transition kernel of the solution of these SDEs is given by a Doob h-transformed Karlin-McGregor semigroup:
\begin{align}
k^{N,(\beta)}_t(x,dy)=e^{\frac{N(N-1)}{2}t}\frac{\Delta_N(y)}{\Delta_N(x)}\det \left(k_t^{(\beta)}(x_i,y_j)\right)_{i,j=1}^Ndy_1\cdots dy_N, \ \ \forall (t,x,y) \in (0,\infty) \times \mathring{W}^N_{c,+} \times W^N_{c,+}.
\end{align}
Write $\left(K_t^{N,(\beta)}\right)_{t\ge 0}$ for the Markov semigroup on $W^N_{c,+}$ with transition kernel $k^{N,(\beta)}_t(x,dy)$.

We now introduce the stationary version of the discrete dynamics. Consider the following birth and death chain, with rate, when at site $n$, of jumping to the right $\sigma\left(n+\beta\right)$ and for jumping to the left $(\sigma+1)n$. Here, the parameter $\sigma>0$. The generator $\mathfrak{L}^{(\beta),\sigma}$ of this birth and death chain is then given by:
\begin{align}
\mathfrak{L}^{(\beta),\sigma}=\sigma (n+\beta) \nabla_++(\sigma+1)n\nabla_-.
\end{align}
Denote by $\mathfrak{k}_t^{(\beta),\sigma}$ its transition density. Moreover, we note that (see for example \cite{BorodinOlshanskiMarkovonPartitions}, \cite{MicloPatie}) this chain is reversible with respect to the negative binomial distribution $\eta_{\beta,\sigma}(\cdot)$ on $\mathbb{Z}_+$:
\begin{align*}
\eta_{\beta,\sigma}(n)=\sigma^n(1+\sigma)^{-n-\beta}\binom{n+\beta-1}{n}.
\end{align*}
Now, consider the corresponding Doob h-transformed Karlin-McGregor determinant given by: 
\begin{align}
\mathfrak{k}^{N,(\beta),\sigma}_t(x,y)=e^{\frac{N(N-1)}{2}t}\frac{\Delta_N(y)}{\Delta_N(x)}\det \left(\mathfrak{k}_t^{(\beta),\sigma}(x_i,y_j)\right)_{i,j=1}^N, \ t>0, x,y\in W^N_{d,+}.
\end{align}
This first appeared in Section 3 of \cite{BorodinOlshanskiMarkovonPartitions}, see also Section 6 of that paper for the interpretation as $N$ independent copies of an $\mathfrak{L}^{(\beta),\sigma}$-chain conditioned to never intersect. Moreover, we denote by $\left(\mathfrak{K}_t^{N,(\beta)}\right)_{t\ge 0}$ the semigroup on $W^{N}_{d,+}$ with transition kernel $\mathfrak{k}^{N,(\beta)}_t(x,y)$.

Finally, we introduce the following Markov kernel $\Lambda_{N,\sigma}$ from $W^N_{c,+}$ to $W^N_{d,+}$ defined by, for $x \in \mathring{W}^N_{c,+}$ (as before it can be continuously extended to $x \in \partial W^N_{c,+}$):
\begin{align}\label{MarkovKernel}
\Lambda_{N,\sigma}\left(x,y\right)=\Lambda_N(\sigma x,y)=\sigma^{-\frac{N(N-1)}{2}}\frac{\Delta_N(y)}{\Delta_N(x)}\det \left(\frac{(\sigma x_i)^{y_j}e^{-\sigma x_i}}{y_j!}\right)_{i,j=1}^N, \ y \in W^N_{d,+}.
\end{align}
Observe that, $\Lambda_N$ is the special case $\Lambda_{N,1}$ with $\sigma=1$. For the connection to the Young bouquet, see Section \ref{ConnectiontoYoungBouquetSection}.

As before, we have the Feller property.

\begin{prop}\label{FellerStationary}
Let $\sigma>0$, $\beta>0$ and $N\ge 1$. The kernel $\Lambda_{N,\sigma}$ and the semigroups $\left(K_t^{N,(\beta)}\right)_{t\ge 0}$ and $\left(\mathfrak{K}_t^{N,(\beta),\sigma}\right)_{t\ge 0}$ are Feller-Markov.
\end{prop}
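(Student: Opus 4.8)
The plan is to reduce each of the three assertions, as far as possible, to the non-stationary statements in Propositions~\ref{FellerKernels} and~\ref{FellerSemigroups}, exploiting that the stationary one-dimensional objects (the $L^{(\beta)}$-diffusion and the $\mathfrak{L}^{(\beta),\sigma}$-chain) are explicit deterministic transformations of their non-stationary counterparts and that, thanks to the homogeneity $\Delta_N(ax)=a^{N(N-1)/2}\Delta_N(x)$, these transformations lift cleanly to the $N$-level kernels. Throughout I will use that for $a>0$ the dilation $d_a\colon W^N_{c,+}\to W^N_{c,+}$, $d_a(x)=ax$, is a proper homeomorphism, so precomposition with $d_a$ maps $C_0(W^N_{c,+})$ into itself and preserves continuity up to and including $\partial W^N_{c,+}$. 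For the kernel $\Lambda_{N,\sigma}$ the reduction is immediate: by definition $\Lambda_{N,\sigma}(x,\cdot)=\Lambda_N(d_\sigma(x),\cdot)$, so Proposition~\ref{FellerKernels} at once gives that $\Lambda_{N,\sigma}(x,\cdot)$ is a probability measure on $W^N_{d,+}$ depending continuously on $x\in W^N_{c,+}$ (including on $\partial W^N_{c,+}$), and that $\Lambda_{N,\sigma}\mathfrak{f}=(\Lambda_N\mathfrak{f})\circ d_\sigma\in C_0(W^N_{c,+})$ for every $\mathfrak{f}\in C_0(W^N_{d,+})$.

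For the semigroup $(K_t^{N,(\beta)})_{t\ge0}$ I would invoke the classical space--time change for the squared Bessel family (see \cite{SurveyBessel}, \cite{MicloPatie}): if $(Z_s)_{s\ge0}$ is the $G^{(\beta)}$-diffusion started from $x$, then $(e^{-t}Z_{e^t-1})_{t\ge0}$ is the $L^{(\beta)}$-diffusion started from $x$, equivalently $k_t^{(\beta)}(x,y)=e^t\,q_{e^t-1}^{(\beta)}(x,e^ty)$. Substituting this into the Karlin--McGregor determinant, pulling a factor $e^t$ out of each of the $N$ rows, using the homogeneity of $\Delta_N$ to rewrite the Vandermonde ratio, and changing variables $z=e^ty$, one arrives at
\begin{align*}
K_t^{N,(\beta)}f=Q_{e^t-1}^{N,(\beta)}\!\left(f\circ d_{e^{-t}}\right)
\end{align*}
for bounded measurable $f$ on $W^N_{c,+}$. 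Taking $f\equiv 1$ shows $K_t^{N,(\beta)}(x,\cdot)$ is a probability measure; for $f\in C_0(W^N_{c,+})$ we have $f\circ d_{e^{-t}}\in C_0(W^N_{c,+})$ and hence, by Proposition~\ref{FellerSemigroups}, $K_t^{N,(\beta)}f\in C_0(W^N_{c,+})$ with continuous extension to $\partial W^N_{c,+}$; and strong continuity at $t=0$ follows from
\begin{align*}
\left\|K_t^{N,(\beta)}f-f\right\|_\infty\le\left\|f\circ d_{e^{-t}}-f\right\|_\infty+\left\|Q_{e^t-1}^{N,(\beta)}f-f\right\|_\infty,
\end{align*}
both terms tending to $0$ as $t\to0$ (the second by Proposition~\ref{FellerSemigroups}, the first because a $C_0$ function is uniformly continuous and vanishes at infinity while $d_{e^{-t}}\to\mathrm{id}$).

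For the discrete stationary semigroup $(\mathfrak{K}_t^{N,(\beta),\sigma})_{t\ge0}$ no such reduction seems to be available --- on a lattice there is no spatial rescaling, and a deterministic time change does not alter the ratio of the up- and down-rates, so $\mathfrak{L}^{(\beta),\sigma}$ is not a space--time transform of $\mathfrak{G}^{(\beta)}$. Instead I would rerun, essentially verbatim, the argument establishing the discrete half of Proposition~\ref{FellerSemigroups}, with $\mathfrak{G}^{(\beta)}$ replaced by $\mathfrak{L}^{(\beta),\sigma}$; the two new inputs are (i) that $\Delta_N$ is an eigenfunction of the associated killed Karlin--McGregor semigroup with eigenvalue $e^{-N(N-1)t/2}$, which is exactly what makes the normalising factor $e^{N(N-1)t/2}$ turn $\mathfrak{k}_t^{N,(\beta),\sigma}$ into a Markov kernel (this is already in \cite{BorodinOlshanskiMarkovonPartitions}), and (ii) the Feller property of the one-dimensional chain $\mathfrak{L}^{(\beta),\sigma}$ on $\mathbb{Z}_+$. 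For (ii): the chain is non-explosive (the rates grow only linearly), so $\mathfrak{k}_t^{(\beta),\sigma}(m,\cdot)$ is honest and $t\mapsto\mathfrak{k}_t^{(\beta),\sigma}(m,\cdot)$ right-continuous, and for $\mathfrak{g}$ vanishing at infinity one has $\sum_n\mathfrak{k}_t^{(\beta),\sigma}(m,n)\mathfrak{g}(n)\to0$ as $m\to\infty$ because the hitting time from $m$ of any fixed finite window tends to $+\infty$ in probability as $m\to\infty$; this last is a standard birth-and-death estimate, provable by exhibiting a simple power-function supersolution of $\mathfrak{L}^{(\beta),\sigma}-\theta$ on $\{m>M\}$ and applying the maximum principle to $\mathbb{E}_m[e^{-\theta\tau_M}]$.

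The step I expect to be the real obstacle is, just as in Propositions~\ref{FellerKernels} and~\ref{FellerSemigroups}, the $C_0\to C_0$ claim at the $N$-particle level: uniform control of $\tfrac{\Delta_N(y)}{\Delta_N(x)}\det(\cdots)$ as $x$ degenerates --- for $K_t^{N,(\beta)}$ both near the walls $x_i=x_j$ of $W^N_{c,+}$ and at infinity. The point of the space--time change is precisely that it defers this obstacle for $K_t^{N,(\beta)}$ back to Proposition~\ref{FellerSemigroups}; for $\mathfrak{K}_t^{N,(\beta),\sigma}$ it must be confronted directly, but since $W^N_{d,+}$ is discrete there are no walls to approach, and the estimates are the same ones already carried out for $\mathfrak{Q}_t^{N,(\beta)}$, with decay at infinity supplied by (ii).
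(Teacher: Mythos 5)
Your proposal is correct, and for the continuous stationary semigroup it takes a genuinely different route from the paper. The paper disposes of $\left(K_t^{N,(\beta)}\right)_{t\ge 0}$ exactly as it did for $\left(Q_t^{N,(\beta)}\right)_{t\ge 0}$: it passes to the matrix-valued \emph{stationary} Laguerre process $\boldsymbol{Y}_t$ (an Ornstein--Uhlenbeck-type/affine SDE on $H_+(N)$), quotes its Feller property from the Wishart/affine-process literature, and projects through $\mathsf{eval}_N$ using $\left[K_t^{N,(\beta)}f\right](x)=\left[\mathcal{W}_{\mathsf{stat}}^{N,(\beta)}(t)\,f\circ\mathsf{eval}_N\right](\mathbf{U}^*x\mathbf{U})$; the kernel $\Lambda_{N,\sigma}$ and the discrete semigroup are then said to be handled exactly as in Propositions \ref{FellerKernels} and \ref{FellerSemigroups}. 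You instead reduce $K_t^{N,(\beta)}$ to the already-established $Q_{e^t-1}^{N,(\beta)}$ via the deterministic space--time change $k_t^{(\beta)}(x,y)=e^t q_{e^t-1}^{(\beta)}(x,e^ty)$, and your bookkeeping is right: the factor $e^{Nt}$ from the rows of the determinant, the Jacobian $e^{-Nt}$, and the homogeneity $\Delta_N(e^{-t}z)=e^{-N(N-1)t/2}\Delta_N(z)$ exactly cancel the prefactor $e^{N(N-1)t/2}$, yielding $K_t^{N,(\beta)}f=Q_{e^t-1}^{N,(\beta)}\left(f\circ d_{e^{-t}}\right)$, from which Markovianity, the $C_0\to C_0$ property (including the extension to $\partial W^N_{c,+}$) and strong continuity at $t=0$ all follow from Proposition \ref{FellerSemigroups}. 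What each approach buys: the paper's is uniform with its earlier argument and requires no new computation, at the price of invoking the matrix-level Feller theory a second time; yours is self-contained relative to Propositions \ref{FellerKernels} and \ref{FellerSemigroups}, avoids any further appeal to the affine-process literature, and incidentally explains why $e^{N(N-1)t/2}$ is the correct normalization. Your observations that $\Lambda_{N,\sigma}=\Lambda_N\circ d_\sigma$ settles the kernel immediately, and that no space--time reduction exists on the lattice (the up- and down-rates are rescaled by different factors $\sigma$ and $\sigma+1$) so the discrete half must be rerun directly, are both accurate and match what the paper implicitly does; the one-dimensional discrete Feller inputs you sketch are available in the cited references, so no gap remains.
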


Finally, we have the following stationary analogue of Theorem \ref{MainResult}.
\begin{thm}\label{MainResultStationary}
Let $\sigma>0$ and $\beta>0$. For all $N\ge1, t \ge 0$ we have the following equality between Feller-Markov kernels:
\begin{align}\label{stationaryintertwining}
K_t^{N,(\beta)}\Lambda_{N,\sigma}=\Lambda_{N,\sigma}\mathfrak{K}_t^{N,(\beta),\sigma}.
\end{align}
\end{thm}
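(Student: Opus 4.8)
\textbf{Proof strategy for Theorem \ref{MainResultStationary}.}

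The plan is to mirror the proof of Theorem \ref{MainResult}, reducing the multidimensional stationary intertwining to the one-dimensional case of Miclo and Patie \cite{MicloPatie} and then lifting it via the Karlin--McGregor determinantal structure. The starting point is the observation already built into the definition \eqref{MarkovKernel}: $\Lambda_{N,\sigma}(x,y)=\Lambda_N(\sigma x,y)$, and at the level of the underlying one-dimensional kernels the rescaling $x\mapsto \sigma x$ conjugates the squared Bessel generator $G^{(\beta)}$ into (a time-changed version of) $L^{(\beta)}$; concretely, I would pin down the exact relation between $k_t^{(\beta)}$, $q_t^{(\beta)}$ and a deterministic Ornstein--Uhlenbeck-type change of time/scale, and likewise the relation between the discrete generators $\mathfrak{L}^{(\beta),\sigma}$ and $\mathfrak{G}^{(\beta)}$. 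The $N=1$ intertwining $k_t^{(\beta)}\Lambda_{1,\sigma}=\Lambda_{1,\sigma}\mathfrak{k}_t^{(\beta),\sigma}$ is then precisely (the relevant instance of) the Miclo--Patie result, which I take as given.

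With the $N=1$ identity in hand, the key step is to promote it to $N\ge 2$. The mechanism is the Karlin--McGregor / Lindström--Gessel--Viennot formalism: the one-dimensional semigroup kernels $k_t^{(\beta)}$, $\mathfrak{k}_t^{(\beta),\sigma}$ and the one-dimensional link $\Lambda_{1,\sigma}$ all satisfy the one-dimensional intertwining, and since the $N$-dimensional objects $K_t^{N,(\beta)}$, $\mathfrak{K}_t^{N,(\beta),\sigma}$, $\Lambda_{N,\sigma}$ are each obtained from their one-dimensional counterparts by the same recipe — conjugate by the Vandermonde $\Delta_N$, take the $N\times N$ determinant, and insert the explicit factor $e^{\frac{N(N-1)}{2}t}$ (whose role is exactly to absorb the eigenvalue shift produced by the $\Delta_N$-conjugation of $L^{(\beta)}$, matching the $\sigma^{-N(N-1)/2}$ normalization in $\Lambda_{N,\sigma}$) — one gets the $N$-dimensional intertwining by a Cauchy--Binet computation. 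Concretely, I would expand $\bigl(K_t^{N,(\beta)}\Lambda_{N,\sigma}\bigr)(x,y)$ as a product of two determinants with an integration over the intermediate chamber $W^N_{c,+}$, use the Cauchy--Binet / Andréief identity to collapse it into a single determinant whose $(i,j)$ entry is $\bigl(k_t^{(\beta)}\Lambda_{1,\sigma}\bigr)(x_i,y_j)$, apply the $N=1$ intertwining entrywise to rewrite this as $\bigl(\Lambda_{1,\sigma}\mathfrak{k}_t^{(\beta),\sigma}\bigr)(x_i,y_j)$, and then run Cauchy--Binet in reverse to recognize $\bigl(\Lambda_{N,\sigma}\mathfrak{K}_t^{N,(\beta),\sigma}\bigr)(x,y)$, keeping careful track of the $\Delta_N$ prefactors and the exponential factor throughout; the time-change/scaling dictionary from the first paragraph is what makes the $\beta$- and $\sigma$-dependence line up.

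The main obstacle is not the formal determinant manipulation but the analytic legitimacy of the steps: justifying the interchange of the (infinite, over an unbounded chamber) integral with the determinant expansion, i.e. verifying the absolute integrability needed to apply Cauchy--Binet/Andréief, and checking that the resulting identity of kernels is genuinely an identity of \emph{Feller--Markov} kernels — that both sides are well-defined, mass-preserving, and extend continuously to the boundary $\partial W^N_{c,+}$ where $1/\Delta_N(x)$ is singular. For the integrability one leverages the Gaussian-type (Gamma/negative-binomial) tails of $k_t^{(\beta)}$ and $\mathfrak{k}_t^{(\beta),\sigma}$ together with polynomial bounds on the Vandermonde ratios, exactly as in the non-stationary case; the boundary extension and the Feller property are supplied by Proposition \ref{FellerStationary}. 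A clean alternative that sidesteps some of the bookkeeping is to deduce the stationary statement directly from Theorem \ref{MainResult} by the explicit deterministic space-time change relating $(K_t^{N,(\beta)},\mathfrak{K}_t^{N,(\beta),\sigma},\Lambda_{N,\sigma})$ to $(Q_\cdot^{N,(\beta)},\mathfrak{Q}_\cdot^{N,(\beta)},\Lambda_N)$, so that \eqref{stationaryintertwining} becomes a reparametrization of \eqref{MainIntertwining1}; I would check whether the change of variables is exact enough to make this rigorous, and fall back on the direct Cauchy--Binet argument otherwise.
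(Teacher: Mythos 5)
Your main line of attack — take the $N=1$ stationary intertwining from Miclo--Patie (their Proposition 22) as given, then lift it to $N\ge 2$ by expanding the composed kernels, applying the Andr\'eief/Cauchy--Binet identity to reduce to the one-dimensional identity entrywise inside a single determinant, and invoking the Feller properties from Proposition \ref{FellerStationary} to handle the boundary — is exactly the paper's proof, which is stated as "entirely analogous" to that of Theorem \ref{MainResult}. The preliminary discussion of a space-time change relating $G^{(\beta)}$ to $L^{(\beta)}$ is not needed (and your proposed spatial rescaling alone would not realize it), but since you correctly fall back on the direct determinantal argument, the proposal is sound.
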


\begin{rmk}\label{RemarkBOproof}
The case $N=1$ is proven in \cite{MicloPatie}. In fact, a proof by different methods first appeared in Section 6 of \cite{BorodinOlshanskiThoma} as part of a more general scheme. Again, we do not give an independent proof of this case but rather use it as a key ingredient.
\end{rmk}

\subsubsection{The stationary measures: a relation between the Laguerre and Meixner ensembles}
For $\beta>0$, consider the Laguerre ensemble (or complex Wishart probability measure), see \cite{WishartOriginal}, \cite{Forrester}, on $N \times N$ Hermitian matrices, supported on $H_+(N)$:
\begin{align*}
\mathsf{M}^{(\beta),N}(d\mathbf{H})=\textnormal{const}_{\beta, N}\det\left(\mathbf{H}\right)^{\beta-1} e^{-\textnormal{Tr}\mathbf{H}}\mathbf{1}_{\left\{\mathbf{H} \in H_+(N)\right\}}d\mathbf{H}
\end{align*}
where $d\mathbf{H}$ denotes Lebesgue measure on $H(N)$:
\begin{align*}
d\mathbf{H}=\prod_{j=1}^{N}d\mathbf{H}_{jj} \prod_{1 \le j <k \le N}^{}d \Re \mathbf{H}_{jk}d \Im \mathbf{H}_{jk}.
\end{align*}
Then, by Weyl's integration formula the induced probability measure on eigenvalues on $W^{N}_{c,+}$ is given by:
\begin{align}
\nu_{\beta}^N(dx)=\left(\mathsf{eval}_N\right)_*\mathsf{M}^{(\beta),N}(dx)=const_{N, \beta} \Delta_N(x)^2\prod_{i=1}^{N}\nu_{\beta}(dx_i).
\end{align}
Finally, we define the Meixner ensemble to be the following probability measure on $W^N_{d,+}$, where the parameters $\sigma,\beta>0$:
\begin{align}
\eta_{\beta, \sigma}^N(\lambda)=const_{N,\beta,\sigma} \Delta_N(\lambda)^2 \prod_{i=1}^{N}\eta_{\beta,\sigma}(\lambda_i), \ \lambda \in W^N_{d,+}.
\end{align}
This appears in problems of last passage percolation, see \cite{JohanssonMarkovChain},\cite{JohanssonShapeFluctuations}, \cite{JohanssonTilings}, \cite{JohanssonPlancherel} and is also a special case of the distinguished z-measures on partitions, see Section \ref*{zMeasuresSubsection} and also the original papers \cite{BorodinOlshanskiZMeasures}, \cite{BorodinOlshanskiMeixner}.

The following proposition is well-known but we also give a proof for completeness.
\begin{prop}\label{StationaryMeasuresProposition}
Let $\sigma>0$, $\beta>0$ and $N\ge 1$. Then, $\nu_{\beta}^N$ is invariant for the semigroup $\left(K_t^{N,(\beta)}\right)_{t\ge 0}$. Moreover, $\eta^N_{\beta,\sigma}$ is the unique invariant measure of $\left(\mathfrak{K}_t^{N,(\beta),\sigma}\right)_{t\ge 0}$.
\end{prop}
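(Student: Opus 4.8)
The plan is to establish both statements through reversibility: the one-dimensional kernels $k_t^{(\beta)}$ and $\mathfrak k_t^{(\beta),\sigma}$ are reversible with respect to $\nu_\beta$ and $\eta_{\beta,\sigma}$ respectively, and I would lift this to the $N$-dimensional Doob-transformed Karlin--McGregor semigroups, then add an irreducibility argument for the uniqueness claim in the discrete case.

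For the continuous case I claim $\left(K_t^{N,(\beta)}\right)_{t\ge0}$ is reversible with respect to $\nu_\beta^N$, which gives invariance at once since $\nu_\beta^N$ is a probability measure that puts no mass on $\partial W^N_{c,+}$, while $k^{N,(\beta)}_t(x,\cdot)$ is supported on $\mathring W^N_{c,+}$ for $x\in\mathring W^N_{c,+}$. To verify detailed balance I would compute, for $x,y\in\mathring W^N_{c,+}$,
\[
\nu_\beta^N(dx)\,k^{N,(\beta)}_t(x,dy)=\mathrm{const}\cdot e^{\frac{N(N-1)}{2}t}\Delta_N(x)\Delta_N(y)\Big(\textstyle\prod_{i}\nu_\beta(x_i)\Big)\det\!\big(k_t^{(\beta)}(x_i,y_j)\big)_{i,j=1}^N\,dx\,dy,
\]
absorb each $\nu_\beta(x_i)$ into the $i$-th row of the determinant, use $\nu_\beta(x_i)k_t^{(\beta)}(x_i,y_j)=\nu_\beta(y_j)k_t^{(\beta)}(y_j,x_i)$, then transpose the matrix and pull the factors $\nu_\beta(y_j)$ back out; this shows the displayed expression is symmetric in $x\leftrightarrow y$, i.e.\ detailed balance holds.

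For the discrete case the identical manipulation with $\mathfrak k_t^{(\beta),\sigma}$ reversible with respect to $\eta_{\beta,\sigma}$ shows $\left(\mathfrak K_t^{N,(\beta),\sigma}\right)_{t\ge0}$ is reversible with respect to $\eta^N_{\beta,\sigma}$, hence $\eta^N_{\beta,\sigma}$ is invariant. For uniqueness I would argue that this is an irreducible conservative continuous-time Markov chain on the countable set $W^N_{d,+}$: all jump rates of the single chain are positive (here $\beta,\sigma>0$), so $\mathfrak k_t^{(\beta),\sigma}(n,m)>0$ for all $n,m\in\mathbb Z_+$ and $t>0$, and the transition function of a birth--death chain is strictly totally positive, so $\det\big(\mathfrak k_t^{(\beta),\sigma}(x_i,y_j)\big)_{i,j=1}^N>0$ for $x,y\in W^N_{d,+}$ (\cite{KarlinMcGregorCoincidence}); therefore $\mathfrak k^{N,(\beta),\sigma}_t(x,y)>0$ for all $x,y\in W^N_{d,+}$, $t>0$. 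An irreducible continuous-time Markov chain on a countable state space that admits a stationary probability distribution is positive recurrent, and its stationary distribution is then unique, which finishes the proof.

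One structural input specific to $N\ge2$, used implicitly already in Proposition \ref{FellerStationary} and worth recording, is that $\Delta_N$ is a generalized eigenfunction of the un-Doob-transformed Karlin--McGregor semigroups with eigenvalue $-\tfrac{N(N-1)}{2}$: writing $\Delta_N(x)=\det(p_{j-1}(x_i))$ with $p_k$ the monic eigenfunction of $L^{(\beta)}$ (resp.\ $\mathfrak L^{(\beta),\sigma}$) of degree $k$ and eigenvalue $-k$ --- these exist because both operators act triangularly on polynomials graded by degree --- and applying the Andr\'eief identity termwise gives $\int\det(k_t^{(\beta)}(x_i,y_j))\Delta_N(y)\,dy=e^{-\frac{N(N-1)}{2}t}\Delta_N(x)$, which is exactly what makes the $e^{\frac{N(N-1)}{2}t}$-compensated $h$-transform a conservative Markov semigroup. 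I do not anticipate a serious obstacle; the only delicate points are carrying out the detailed-balance computation cleanly (including checking that the boundary of $W^N_{c,+}$ is immaterial, which it is) and invoking the correct strict total positivity statement for birth--death transition functions to get irreducibility in the discrete case.
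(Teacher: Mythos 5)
Your proposal is correct and follows essentially the same route as the paper: both lift the one-dimensional reversibility of $k_t^{(\beta)}$ and $\mathfrak{k}_t^{(\beta),\sigma}$ through the rows and columns of the Karlin--McGregor determinant, use conservativeness of the Doob-transformed semigroup (i.e.\ $\int\det\bigl(k_t^{(\beta)}(y_i,x_j)\bigr)\Delta_N(x)\,dx=e^{-\frac{N(N-1)}{2}t}\Delta_N(y)$) to close the computation, and invoke irreducibility of a Markov chain on a countable state space for uniqueness. Your phrasing via detailed balance of the $N$-dimensional kernel is just the symmetric form of the paper's direct computation of $\nu_\beta^N K_t^{N,(\beta)}$, and your extra justification of irreducibility via strict total positivity is a welcome detail the paper leaves implicit.
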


\begin{rmk}
In fact, $\nu_{\beta}^N$ is the unique invariant measure of $K_t^{N,(\beta)}$ but we shall not need this here.
\end{rmk}
We finally, obtain the following exact relation between the Laguerre and Meixner ensembles.

\begin{prop}\label{ExactRelationLaguerreMeixner} Let $\sigma>0$ and $\beta>0$. For all $\lambda \in W^N_{d,+}$ we have:
\begin{align}
\left[\nu_{\beta}^N\Lambda_{N,\sigma}\right](\lambda)=\eta_{\beta, \sigma}^N(\lambda).
\end{align}
\end{prop}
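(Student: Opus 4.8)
The plan is to evaluate $\left[\nu_{\beta}^N\Lambda_{N,\sigma}\right](\lambda)$ directly, collapsing the $N$-fold integral to a single $N\times N$ determinant via the Andréief (Heine) identity and then simplifying.

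First I would insert the explicit formulas for $\nu_{\beta}^N$ and $\Lambda_{N,\sigma}$. One of the two Vandermonde factors in $\nu_{\beta}^N(dx)=const_{N,\beta}\,\Delta_N(x)^2\prod_i\nu_\beta(dx_i)$ cancels the $1/\Delta_N(x)$ appearing in $\Lambda_{N,\sigma}(x,y)$, and writing $\Delta_N(x)=\det\left(x_i^{j-1}\right)_{i,j=1}^N$ leaves the integrand as a product of two $N\times N$ determinants integrated against the symmetric measure $\prod_{i=1}^N\nu_\beta(dx_i)$. Since the domain of integration is the ordered chamber $W^N_{c,+}$ and the product of the two antisymmetric determinants is symmetric, the Andréief identity applies with no $1/N!$ correction:
\begin{align*}
\int_{W^N_{c,+}}\Delta_N(x)\det\!\left(\frac{(\sigma x_i)^{\lambda_j}e^{-\sigma x_i}}{\lambda_j!}\right)_{i,j=1}^N\prod_{i=1}^N\nu_\beta(dx_i)=\det\!\left(\int_0^\infty x^{j-1}\frac{(\sigma x)^{\lambda_k}e^{-\sigma x}}{\lambda_k!}\frac{x^{\beta-1}e^{-x}}{\Gamma(\beta)}\,dx\right)_{j,k=1}^N ,
\end{align*}
all integrals converging since $\beta>0$.

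Next I would evaluate each entry via $\int_0^\infty x^{s-1}e^{-cx}dx=\Gamma(s)c^{-s}$ with $s=\lambda_k+\beta+j-1$ and $c=\sigma+1$, obtaining $\frac{\sigma^{\lambda_k}}{\lambda_k!\,\Gamma(\beta)}\,\Gamma(\lambda_k+\beta+j-1)(\sigma+1)^{-(\lambda_k+\beta+j-1)}$. Pulling out of the $k$-th column the factor $\eta_{\beta,\sigma}(\lambda_k)=\frac{\sigma^{\lambda_k}\Gamma(\lambda_k+\beta)}{\lambda_k!\,\Gamma(\beta)(\sigma+1)^{\lambda_k+\beta}}$ and out of the $j$-th row the factor $(\sigma+1)^{-(j-1)}$, the residual $(j,k)$ entry becomes $\Gamma(\lambda_k+\beta+j-1)/\Gamma(\lambda_k+\beta)=(\lambda_k+\beta)(\lambda_k+\beta+1)\cdots(\lambda_k+\beta+j-2)$, a monic polynomial of degree $j-1$ in $\lambda_k+\beta$. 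A determinant whose $j$-th row consists of such monic degree-$(j-1)$ polynomials equals the ordinary Vandermonde $\det\!\left((\lambda_k+\beta)^{j-1}\right)_{j,k=1}^N=\Delta_N(\lambda+\beta\mathbf{1})=\Delta_N(\lambda)$ (shifting all coordinates by the same constant leaves the differences unchanged). The collected row factors give $(\sigma+1)^{-N(N-1)/2}$. Reinstating the prefactors $const_{N,\beta}$ and $\sigma^{-N(N-1)/2}$ from $\nu_\beta^N$ and $\Lambda_{N,\sigma}$, I would conclude that $\left[\nu_\beta^N\Lambda_{N,\sigma}\right](\lambda)$ equals a constant depending only on $N,\beta,\sigma$ times $\Delta_N(\lambda)^2\prod_{i=1}^N\eta_{\beta,\sigma}(\lambda_i)$, i.e. a constant multiple of $\eta_{\beta,\sigma}^N(\lambda)$.

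Finally, to upgrade proportionality to equality I would observe that $\nu_\beta^N\Lambda_{N,\sigma}$ is a probability measure on $W^N_{d,+}$ — because $\nu_\beta^N$ is a probability measure and $\Lambda_{N,\sigma}$ is Markov by Proposition \ref{FellerStationary} — while $\eta_{\beta,\sigma}^N$ is also a probability measure; two proportional probability measures coincide, so the normalizing constants match automatically and the identity follows (in passing this yields $const_{N,\beta,\sigma}=const_{N,\beta}\,(\sigma(\sigma+1))^{-N(N-1)/2}$). The only steps that need genuine care are the bookkeeping in the Andréief step, in particular that integrating over the ordered chamber removes the usual $1/N!$, and the reduction of the Pochhammer determinant to a Vandermonde; both are standard.
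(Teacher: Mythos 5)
Your computation is correct, but it reaches the result by a genuinely different route from the paper. The paper treats this proposition as a one-line corollary of the dynamics: applying $\nu_{\beta}^N$ to both sides of the intertwining $K_t^{N,(\beta)}\Lambda_{N,\sigma}=\Lambda_{N,\sigma}\mathfrak{K}_t^{N,(\beta),\sigma}$ of Theorem \ref{MainResultStationary}, then using the invariance of $\nu_{\beta}^N$ under $K_t^{N,(\beta)}$ and the uniqueness of the invariant measure of $\mathfrak{K}_t^{N,(\beta),\sigma}$ (both from Proposition \ref{StationaryMeasuresProposition}), one concludes that $\nu_{\beta}^N\Lambda_{N,\sigma}$ is invariant for the discrete chain and must therefore equal $\eta_{\beta,\sigma}^N$. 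You instead evaluate $\nu_{\beta}^N\Lambda_{N,\sigma}$ head-on, and every step checks out: the cancellation of one Vandermonde, the Andr\'eief reduction over the ordered chamber with no $1/N!$ (the integrand is symmetric, being the product of two antisymmetric determinants), the Gamma integral giving the entry $\frac{\sigma^{\lambda_k}}{\lambda_k!\,\Gamma(\beta)}\Gamma(\lambda_k+\beta+j-1)(\sigma+1)^{-(\lambda_k+\beta+j-1)}$, the extraction of $\eta_{\beta,\sigma}(\lambda_k)$ from columns and $(\sigma+1)^{-(j-1)}$ from rows, the reduction of the resulting determinant of monic rising factorials to $\Delta_N(\lambda)$ by row operations, and the normalization argument via the Markov property of $\Lambda_{N,\sigma}$. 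What your approach buys is self-containedness --- it uses only the static kernel and none of the semigroup machinery, and it delivers the explicit proportionality constant as a by-product; what the paper's approach buys is brevity and the structural point (central to the paper) that the Laguerre--Meixner identity is precisely the stationary shadow of the intertwining of the two dynamics.
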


\begin{proof}
Apply $\nu_{\beta}^N$ to both sides of (\ref{stationaryintertwining}):
\begin{align*}
\nu_{\beta}^NK_t^{N,(\beta)}\Lambda_{N,\sigma}=\nu_{\beta}^N\Lambda_{N,\sigma}\mathfrak{K}_t^{N,(\beta),\sigma}, \ \forall t \ge 0.
\end{align*}
By invariance of $\nu_{\beta}^N$ for $K_t^{N,(\beta)}$:
\begin{align*}
\nu_{\beta}^N\Lambda_{N,\sigma}=\nu_{\beta}^N\Lambda_{N,\sigma}\mathfrak{K}_t^{N,(\beta),\sigma}, \ \forall t \ge 0.
\end{align*}
By uniqueness of the invariant measure of $\mathfrak{K}_t^{N,(\beta),\sigma}$ we obtain the statement of the proposition.
\end{proof}

\paragraph{Acknowledgements.} I would like to thank an anonymous referee for a careful reading of the paper and some very useful comments and suggestions. Research supported by ERC Advanced Grant  740900 (LogCorRM).

\section{Proofs}

We first prove Theorem \ref{MainResult} assuming Propositions \ref{FellerKernels} and \ref{FellerSemigroups}.

\begin{proof}[Proof of Theorem \ref{MainResult}]
We first prove relation (\ref{MainIntertwining1}). As already mentioned in the introduction, the key ingredient is the $N=1$ case of the theorem, proven as Theorem 1 in \cite{MicloPatie}, that we recall in our notation as follows, for $t>0, x \in \mathbb{R}_+, y \in \mathbb{Z}_+$:
\begin{align}\label{1dintertwining1}
\int_{0}^{\infty}q_t^{(\beta)}(x,z)\frac{z^ye^{-z}}{y!}dz=\sum_{w=0}^{\infty}\frac{x^we^{-x}}{w!}\mathfrak{q}_t^{(\beta)}(w,y).
\end{align}
Let $N\ge 1$ be arbitrary. We calculate for $t>0$ and $x \in \mathring{W}^N_{c,+}$, where we use the fact that $Q_t^{N,(\beta)}(x,dz)$ is supported in $\mathring{W}^N_{c,+}$, the Andreif identity and (\ref{1dintertwining1}): 
\begin{align*}
Q_t^{N,(\beta)}\Lambda_N\left(x,y\right)&= \int_{z \in \mathring{W}^N_{c,+}}^{}\frac{\Delta_N(z)}{\Delta_N(x)}\det \left(q_t^{(\beta)}(x_i,z_j)\right)_{i,j=1}^N\frac{\Delta_N(y)}{\Delta_N(z)}\det \left(\frac{z_i^{y_j}e^{-z_i}}{y_j!}\right)_{i,j=1}^N dz_1\cdots dz_N\\
&= \frac{\Delta_N(y)}{\Delta_N(x)}\int_{z \in \mathring{W}^N_{c,+}}^{}\det \left(q_t^{(\beta)}(x_i,z_j)\right)_{i,j=1}^N\det \left(\frac{z_i^{y_j}e^{-z_i}}{y_j!}\right)_{i,j=1}^N dz_1\cdots dz_N\\
&= \frac{\Delta_N(y)}{\Delta_N(x)}\det\left[\int_{0}^{\infty}q_t^{(\beta)}(x_i,z)\frac{z^{y_j}e^{-z}}{y_j!}dz\right]_{i,j=1}^N\\
&=\frac{\Delta_N(y)}{\Delta_N(x)}\det\left[\sum_{w=0}^{\infty}\frac{x_i^we^{-x_i}}{w!}\mathfrak{q}_t^{(\beta)}(w,y_j)\right]_{i,j=1}^N.
\end{align*}
On the other hand:
\begin{align*}
\Lambda_N\mathfrak{Q}_t^{N,(\beta)}(x,y)&=\sum_{w \in W^N_{d,+}}^{}\frac{\Delta_N(w)}{\Delta_N(x)}\det \left(\frac{x_i^{w_j}e^{-x_i}}{w_j!}\right)_{i,j=1}^N\frac{\Delta_N(y)}{\Delta_N(w)}\det \left(\mathfrak{q}_t^{(\beta)}(w_i,y_j)\right)_{i,j=1}^N\\
&=\frac{\Delta_N(y)}{\Delta_N(x)}\sum_{w \in W^N_{d,+}}^{}\det \left(\frac{x_i^{w_j}e^{-x_i}}{w_j!}\right)_{i,j=1}^N\det \left(\mathfrak{q}_t^{(\beta)}(w_i,y_j)\right)_{i,j=1}^N\\
&=\frac{\Delta_N(y)}{\Delta_N(x)}\det\left[\sum_{w=0}^{\infty}\frac{x_i^we^{-x_i}}{w!}\mathfrak{q}_t^{(\beta)}(w,y_j)\right]_{i,j=1}^N.
\end{align*}
Thus, we obtain that both sides are equal for $t>0$ and $x \in \mathring{W}^N_{c,+}$. Using the Feller property of all the Markov kernels involved we extend this to:
\begin{align}
Q_t^{N,(\beta)}\Lambda_N\left(x,\cdot\right)=\Lambda_N\mathfrak{Q}_t^{N,(\beta)}(x,\cdot), \ \forall t \ge 0, \forall x \in W^N_{c,+}.
\end{align}

We now turn to relation (\ref{MainIntertwining2}). The $N=1$ case, again proven as Theorem 1 in \cite{MicloPatie}, which is as follows in our notation, for $t>0, y \in \mathbb{Z}_+, x \in \mathbb{R}_+$:
\begin{align}\label{1dintertwining2}
\sum_{w=0}^{\infty}\mathfrak{q}_t^{(\beta)}(y,w) \frac{x^{w+\beta-1}e^{-x}}{\Gamma(w+\beta)}dx=
\int_{0}^{\infty}\frac{z^{y+\beta-1}e^{-z}}{\Gamma(y+\beta)}q_t^{(\beta)}(z,x)dzdx.
\end{align}
Again, we calculate for $t>0$ using (\ref{1dintertwining2}):
\begin{align*}
\mathfrak{Q}_t^{N,(\beta)}\Lambda^*_{N,\beta}\left(y,dx\right)&=\sum_{w \in W^N_{d,+}}^{}\frac{\Delta_N(x)}{\Delta_N(z)}\det\left(\mathfrak{q}_t^{(\beta)}(y_i,w_j)\right)_{i,j=1}^N\frac{\Delta_N(z)}{\Delta_N(y)}\det \left(\frac{x_i^{w_j+\beta-1}e^{-x_i}}{\Gamma(w_j+\beta)}\right)_{i,j=1}^Ndx_1\cdots dx_N\\
&=\frac{\Delta_N(x)}{\Delta_N(y)}\sum_{w \in W^N_{d,+}}^{}\det\left(\mathfrak{q}_t^{(\beta)}(y_i,w_j)\right)_{i,j=1}^N\det \left(\frac{x_i^{w_j+\beta-1}e^{-x_i}}{\Gamma(w_j+\beta)}\right)_{i,j=1}^Ndx_1\cdots dx_N \\
&=\frac{\Delta_N(x)}{\Delta_N(y)}\det\left[
\sum_{w=0}^{\infty}\mathfrak{q}_t^{(\beta)}(y_i,w) \frac{x_j^{w+\beta-1}e^{-x_j}}{\Gamma(w+\beta)}\right]_{i,j=1}^Ndx_1\cdots dx_N \\
&=\frac{\Delta_N(x)}{\Delta_N(y)}\det\left[
\int_{0}^{\infty}\frac{z^{y_i+\beta-1}e^{-z}}{\Gamma(y_i+\beta)}q_t^{(\beta)}(z,x_j)dz\right]_{i,j=1}^Ndx_1\cdots dx_N.
\end{align*}
While on the other hand we have, since $\Lambda_{N,\beta}^*(y,\cdot)$ is supported on $\mathring{W}^N_{c,+}$:
\begin{align*}
\Lambda^*_{N,\beta}Q_t^{N,(\beta)}\left(y,dx\right)&=\frac{\Delta_N(x)}{\Delta_N(y)}\int_{z \in \mathring{W}^N_{c,+}}^{}\det \left(\frac{z_i^{y_j+\beta-1}e^{-z_i}}{\Gamma(y_j+\beta)}\right)_{i,j=1}^N\det\left(q_t^{(\beta)}(z_i,x_j)\right)_{i,j=1}^Ndz_1 \cdots dz_Ndx_1\cdots dx_N\\
&=\frac{\Delta_N(x)}{\Delta_N(y)}\det\left[
\int_{0}^{\infty}\frac{z^{y_i+\beta-1}e^{-z}}{\Gamma(y_i+\beta)}q_t^{(\beta)}(z,x_j)dz\right]_{i,j=1}^Ndx_1\cdots dx_N.
\end{align*}
Thus, we obtain the equality of Feller-Markov kernels:
\begin{align}
\mathfrak{Q}_t^{N,(\beta)}\Lambda^*_{N,\beta}\left(y,\cdot\right)=\Lambda^*_{N,\beta}Q_t^{N,(\beta)}\left(y,\cdot\right), \ \forall t \ge 0, \forall y \in W^N_{d,+}.
\end{align}
\end{proof}

\begin{proof}[Proof of Proposition \ref{FellerKernels}]
The claim that the kernels $\Lambda_{N}$ and $\Lambda^*_{N,\beta}$ are positive is due to the fact that, for $x\in W^{N}_{c,+}$ and $y \in W^{N}_{d,+}$:
\begin{align*}
\det\left(x_i^{y_j}\right)_{i,j=1}^N\ge 0.
\end{align*}
This a consequence, after a change of variables, of the well-known fact that the kernel 
\begin{align*}
K(z,w)=e^{zw}
\end{align*}
is strictly totally positive, see \cite{Karlin}.

We now prove that they are normalized to 1. For $x \in \mathring{W}^{N}_{c,+}$ (we extend this to general $x$ below) we can calculate using the Cauchy-Binet or Andreif identity:
\begin{align*}
\sum_{y \in W^{N}_{d,+}}^{}\Lambda_N(x,y)&=\frac{1}{\det\left(x_i^{j-1}\right)_{i,j=1}^N}\sum_{y\in W^{N}_{d,+} }^{}\det\left(y_i^{j-1}\right)_{i,j=1}^N\det \left( \frac{x_i^{y_j}e^{-x_i}}{y_j!}\right)_{i,j=1}^N\\
&=\frac{1}{\det\left(x_i^{j-1}\right)_{i,j=1}^N}\det \left[\sum_{y=0}^{\infty}\frac{x_i^ye^{-x_i}}{y!}y^{j-1}\right]_{i,j=1}^N.
\end{align*}
On the other hand, it is a classical fact that the moments of the Poisson distribution are given in terms of the Touchard polynomials \cite{Touchard} $\mathsf{T}_{\cdot}(\cdot)$:
\begin{align}
\sum_{y=0}^{\infty}\frac{z^ye^{-z}}{y!}y^{i-1}=\mathsf{T}_{i-1}(z)\overset{\textnormal{def}}{=}\sum_{k=0}^{i-1}\begin{Bmatrix}
i-1 \\
k
\end{Bmatrix}z^k.
\end{align}
Here,
\begin{align*}
\begin{Bmatrix}
n \\
k
\end{Bmatrix}= \frac{1}{k!}\sum_{j=0}^{k}(-1)^{k-j}\binom{k}{j}j^n
\end{align*}
is the Stirling number of the second kind. Note that, these polynomials are monic since $\begin{Bmatrix}
n \\
n
\end{Bmatrix}=1$. In particular, we have 
\begin{align*}
\det \left[\sum_{y=0}^{\infty}\frac{x_i^ye^{-x_i}}{y!}y^{j-1}\right]_{i,j=1}^N=\det \left( \mathsf{T}_{j-1}(x_i)\right)_{i,j=1}^N=\det \left( x_i^{j-1}\right)_{i,j=1}^N
\end{align*}
which gives the correct normalization.

The claim that, for any $y \in W^{N}_{d,+}$
\begin{align*}
\int_{x \in W^N_{c,+}}^{}\Lambda^*_{N,\beta}(y,dx)=\frac{1}{\Delta_N(y)}\int_{x \in W^N_{c,+}}^{}\Delta_N(x)\det \left(\frac{x_i^{y_j+\beta+1}e^{-x_i}}{\Gamma(y_j+\beta)}\right)_{i,j=1}^Ndx_1\cdots dx_N=1
\end{align*}
follows by first using the Andreif identity and then the fact that
\begin{align*}
\int_{0}^{\infty}x^k\frac{x^{z+\beta-1}}{\Gamma(z+\beta)}e^{-x}dx=\frac{\Gamma\left(k+z+\beta\right)}{\Gamma\left(z+\beta\right)}=\left(z+\beta+k-1\right)\cdots (z+\beta)
\end{align*}
is a monic polynomial of degree $k$ in $z$.

We now extend $\Lambda_N(x,y)$ to $x \in \partial W^{N}_{c,+}$ by elaborating briefly on the argument from the introduction. We first write it as
\begin{align*}
\Lambda_N(x,y)=\det\left(y_i^{j-1}\right)_{i,j=1}^N\prod_{i=1}^{N}e^{-x_i}\prod_{j=1}^{N}\frac{1}{y_j!}\frac{\det\left(x_i^{y_j}\right)_{i,j=1}^N}{\det\left(x_i^{j-1}\right)_{i,j=1}^N}.
\end{align*}
Now, it suffices to observe that the function
\begin{align*}
\mathfrak{s}_y(x)=\frac{\det\left(x_i^{y_j}\right)_{i,j=1}^N}{\det\left(x_i^{j-1}\right)_{i,j=1}^N}
\end{align*} 
is actually a polynomial (essentially the Schur polynomial) in the variables $(x_1,\cdots,x_N)$ and thus can be extended continuously to $x \in \partial W^{N}_{c,+}$.

Moving on, assume we are given $f\in C_0\left(W^N_{c,+}\right)$ and $\mathfrak{f}\in C_0\left(W^N_{d,+}\right)$. The claim that the function $\left[\Lambda_N \mathfrak{f}\right](\cdot)$ is continuous in $W^N_{c,+}$ is a consequence of the dominated convergence theorem. Observe that, in the case of $\Lambda_{N,\beta}^*$ there is nothing to prove.

Finally, we need to prove that $\left[\Lambda_N \mathfrak{f}\right](\cdot)$ and $\left[\Lambda^*_{N,\beta} f\right](\cdot)$ vanish at infinity. Let $\epsilon>0$ be fixed. We will use the notation $\lesssim$ to mean $\le$ up to a constant independent of $\epsilon$ which might change from line to line.

Pick $R=R(\epsilon)$ such that 
\begin{align*}
|\mathfrak{f}(y)|<\epsilon, \ \forall y \notin W^N_{d,+}\cap \left[0,R(\epsilon)\right]^N,\\
|f(x)|<\epsilon, \ \forall x \notin W^N_{c,+}\cap \left[0,R(\epsilon)\right]^N.
\end{align*}
Then, we can bound:
\begin{align*}
|\left[\Lambda_N\mathfrak{f}\right](x)| &\le \prod_{i=1}^{N}e^{-x_i}\sum_{y \in W^N_{d,+}}^{}\det\left(y_i^{j-1}\right)_{i,j=1}^N\prod_{j=1}^{N}\frac{1}{y_j!}\mathfrak{s}_y(x)|\mathfrak{f}(y)|\\
& \lesssim \prod_{i=1}^{N}e^{-x_i}\sum_{y \in W^N_{d,+}\cap \left[0,R(\epsilon)\right]^N}^{}\det\left(y_i^{j-1}\right)_{i,j=1}^N\prod_{j=1}^{N}\frac{1}{y_j!}\mathfrak{s}_y(x)+\epsilon\\
& \lesssim const(R(\epsilon)) e^{-x_N} x_N^{R(\epsilon)^2}+\epsilon.
\end{align*}
Clearly, taking $x_N$ large enough we obtain
\begin{align*}
const(R(\epsilon)) e^{-x_N} x_N^{R(\epsilon)^2} < \epsilon,
\end{align*}
from which the conclusion for $\Lambda_N$ follows. 

On the other hand:
\begin{align*}
|\left[\Lambda_{N,\beta}^*f\right](y)|&\le \frac{1}{\det\left(y_i^{j-1}\right)_{i,j=1}^N\prod_{j=1}^{N}\Gamma(y_j+\beta)} \int_{x \in W^{N}_{c,+}}^{}\det\left(x_i^{j-1}\right)_{i,j=1}^N\det\left(x_i^{y_j+\beta-1}\right)_{i,j=1}^N|f(x)|dx\\
&\lesssim \frac{1}{\det\left(y_i^{j-1}\right)_{i,j=1}^N\prod_{j=1}^{N}\Gamma(y_j+\beta)}\int_{x \in W^{N}_{c,+}\cap \left[0,R(\epsilon)\right]^N }^{}\det\left(x_i^{j-1}\right)_{i,j=1}^N\det\left(x_i^{y_j+\beta-1}\right)_{i,j=1}^Ndx +\epsilon\\
& \lesssim const(R(\epsilon))\frac{1}{\det\left(y_i^{j-1}\right)_{i,j=1}^N \prod_{j=1}^{N}\Gamma(y_j+\beta)}R(\epsilon)^{N(y_N+\beta-1)}+\epsilon.
\end{align*}
Note that, for any fixed $M$:
\begin{align*}
\frac{M^{y_N}}{\Gamma(y_N+\beta)} \overset{y_N \to \infty}{\longrightarrow} 0,
\end{align*}
from which the conclusion follows.
\end{proof}

\begin{proof}[Proof of Proposition \ref{FellerSemigroups}]
The result that the transition kernel $\mathfrak{q}_t^{N,(\beta)}$ on $W^N_{d,+}$ defined by, for $t>0, x,y \in W^N_{d,+}$:
\begin{align*}
\mathfrak{q}^{N,(\beta)}_t(x,y)=\frac{\Delta_N(y)}{\Delta_N(x)}\det \left(\mathfrak{q}_t^{(\beta)}(x_i,y_j)\right)_{i,j=1}^N
\end{align*}
gives rise to a Feller semigroup on $C_0\left(W^N_{d,+}\right)$ is rather standard. It is an immediate consequence of the following well-known facts (namely the Feller property for $N=1$, see \cite{MicloPatie}):
\begin{align*}
\lim_{t\to 0}\mathfrak{q}^{(\beta)}_t(x,z)&=\delta(x=z), \forall x,z \in \mathbb{Z}_+,\\
\lim_{x \to \infty}\mathfrak{q}_t^{(\beta)}(x,z)&=0 , \forall z \in \mathbb{Z}_+.
\end{align*}
It is important to observe that for all $x \in W^N_{d,+}$ we have $\Delta_N(x)\ge 1$. The reader is referred to Section 5 of \cite{BorodinOlshanski} for a detailed exposition of an entirely analogous example.

To show that $q^{N,(\beta)}_t(x,y)dy$ defined for $(t,x,y)\in (0,\infty)\times \mathring{W}^N_{c,+}\times W^N_{c,+}$ by
\begin{align*}
q^{N,(\beta)}_t(x,dy)=\frac{\Delta_N(y)}{\Delta_N(x)}\det \left(q_t^{(\beta)}(x_i,y_j)\right)_{i,j=1}^Ndy_1\cdots dy_N
\end{align*}
is Feller the situation is a bit more subtle than in the discrete setting. The continuous extension to the boundary $\partial W^{N}_{c,+}$ however is not too hard to establish using the following argument: the singularities coming from $1/\Delta_N(x)$ are cancelled out by the roots of the function
\begin{align*}
(x_1,\dots,x_N)\mapsto\det \left[q^{(\beta)}_t(x_i,y_j)\right]_{i,j=1}^N
\end{align*}
and then one concludes by using the continuity of the partial derivatives $z\mapsto \partial^i_zq^{(\beta)}_t(z,z')$, which can be obtained from the explicit expression for $q^{(\beta)}_t(z,z')$ involving Bessel functions (see for example \cite{SurveyBessel}). However, we shall take a different approach which gives the Feller property (including the continuous extension to $\partial W^N_{c,+}$) in a unified way and avoids the use of explicit formulae.

We will use the connection to the (matrix) Laguerre process which, unlike the system of SDEs (\ref{RepulsiveSDE}), has no singularities and we can appeal to known results. Recall that, the matrix SDE, for $\beta>0$: 
\begin{align}
d\boldsymbol{X}_t=d\boldsymbol{W}_t\sqrt{\frac{\boldsymbol{X}_t^2}{2}}+\sqrt{\frac{\boldsymbol{X}_t^2}{2}}d\boldsymbol{W}_t^{\dagger}+\left[\beta+(N-1)\right]\boldsymbol{I}dt,
\end{align}
has a unique weak solution for any initial condition $\mathbf{X}_0 \in H_+(N)$, where we recall that $H_+(N)$ is the space of non-negative definite Hermitian matrices (with possibly coinciding or zero eigenvalues), see for example \cite{Demni} or Section 3 of \cite{Wishart} (there the case of symmetric positive definite matrices is considered but the same arguments apply to the Hermitian setting), in particular pages 739-741 for the argument for coinciding eigenvalues. Let $\left(\mathcal{W}^{N,(\beta)}(t);t \ge 0\right)$ be the corresponding Markov semigroup. 

By \cite{Demni}, see also Section 3 of \cite{Wishart}, or for general affine processes (the Laguerre/Wishart is a special case) by Section 3 of \cite{AffineProcesses} (again this is for real symmetric matrices but the same arguments give the result in the Hermitian case) this semigroup is actually Feller. 

Note that, the map $f \mapsto f \circ \mathsf{eval}_N$ maps $C_0\left(W^N_{c,+}\right)$ to $C_0\left(H_+(N)\right)$. Now, from the fact that the eigenvalue evolution of $\left(\boldsymbol{X}_t;t\ge0\right)$ is autonomous we obtain that $\forall f :W^N_{c,+}\to \mathbb{R}$ we have:
\begin{align*}
\mathcal{W}^{N,(\beta)}(t)\left(f \circ \mathsf{eval}_N\right)(\mathbf{H}) \ \textnormal{only depends on } \mathbf{H} \textnormal{ through } \mathsf{eval}_N(\mathbf{H}).
\end{align*}
Namely, $\mathsf{eval}_N(\boldsymbol{X}_t)$ only depends on $\mathbf{H}$ through $\mathsf{eval}_N(\boldsymbol{X}_0=\mathbf{H})$. Thus, if $x=\mathsf{eval}_N(\mathbf{H})$ we have:
\begin{align*}
\left[Q_t^{N,(\beta)}f\right](x)=\left[\mathcal{W}^{N,(\beta)}(t)f\circ \mathsf{eval}_N\right](\mathbf{H})=\left[\mathcal{W}^{N,(\beta)}(t)f\circ \mathsf{eval}_N\right](\mathbf{U}^*x\mathbf{U})\ , \forall \mathbf{U}\in \mathbb{U}(N),
\end{align*}
where $\mathbb{U}(N)$ is the group of $N\times N$ unitary matrices. Then, the Feller property of $Q_t^{N,(\beta)}$ is essentially an immediate consequence of the one of $\mathcal{W}^{N,(\beta)}(t)$. 

For example, since $x_n\to \infty \implies \mathbf{U}^*x_n\mathbf{U} \to \infty$ and $\left[\mathcal{W}^{N,(\beta)}(t)f\circ \mathsf{eval}_N\right]\in C_0\left(\overline{H_+(N)}\right)$ for $f \in C_0\left(W^N_{c,+}\right)$, we get:
\begin{align*}
\left[Q_t^{N,(\beta)}f\right](x_n) \to 0 \ \textnormal{ as } \ x_n \to \infty,
\end{align*}
and we can argue likewise for the other conditions.
\end{proof}

\begin{proof}[Proof of Proposition \ref{FellerStationary}]
The proof is completely analogous to the ones of Proposition \ref{FellerKernels} and Proposition \ref{FellerSemigroups}. One now uses the connection to the stationary Laguerre process $\left(\boldsymbol{Y}_t;t\ge0\right)$, solution of the matrix SDE:
\begin{align*}
d\boldsymbol{Y}_t=d\boldsymbol{W}_t\sqrt{\frac{\boldsymbol{Y}_t^2}{2}}+\sqrt{\frac{\boldsymbol{Y}_t^2}{2}}d\boldsymbol{W}_t^{\dagger}+\left(\left[\beta+(N-1)\right]\boldsymbol{I}-\boldsymbol{Y}_t\right)dt,
\end{align*}
with Feller semigroup  $\left(\mathcal{W}_{\mathsf{stat}}^{N,(\beta)}(t);t \ge 0\right)$, see \cite{Demni}, \cite{Wishart}, \cite{AffineProcesses}. As before, we have:
\begin{align*}
\left[K_t^{N,(\beta)}f\right](x)=\left[\mathcal{W}_{\mathsf{stat}}^{N,(\beta)}(t)f\circ \mathsf{eval}_N\right](\mathbf{U}^*x\mathbf{U})\ , \forall \mathbf{U}\in \mathbb{U}(N)
\end{align*}
and we can argue similarly.
\end{proof}

\begin{proof}[Proof of Proposition \ref{StationaryMeasuresProposition}]
The key to proving invariance is reversibility of the one dimensional processes. We calculate, using the fact that $\nu_{\beta}^N(\cdot)$ is supported on $\mathring{W}^N_{c,+}$ and reversibility of $k_t^{(\beta)}(\cdot,\cdot)$ with respect to $\nu_{\beta}(\cdot)$, for $t>0$:
\begin{align*}
\left[\nu_{\beta}^NK_t^{N,(\beta)}\right](dy)&=const_{N,\beta}\times e^{\frac{N(N-1)}{2}t}\Delta_N(y)dy\int_{x \in \mathring{W}^N_{c,+}}^{}\det\left(k_t^{(\beta)}(x_i,y_j)\right)_{i,j=1}^N \Delta_N(x)\prod_{i=1}^{N}\nu_{\beta}(x_i)dx_i\\
&=const_{N,\beta}\times e^{\frac{N(N-1)}{2}t}\Delta_N(y)dy\prod_{i=1}^{N}\nu_{\beta}(y_i)\int_{x \in \mathring{W}^N_{c,+}}^{}\det\left(k_t^{(\beta)}(y_i,x_j)\right)_{i,j=1}^N \Delta_N(x)\prod_{i=1}^{N}dx_i\\
&=const_{N,\beta}\times e^{\frac{N(N-1)}{2}t}\Delta_N(y)\prod_{i=1}^{N}\nu_{\beta}(y_i)e^{-\frac{N(N-1)}{2}t}\Delta_N(y)dy=\nu_{\beta}^N(dy).
\end{align*}
The third equality above is due to the fact that $K_t^{N,(\beta)}$ is Markovian, in particular $K_t^{N,(\beta)}\mathbf{1}=\mathbf{1}$. The case of $\mathfrak{K}^{N,(\beta),\sigma}_t$ and $\eta_{\beta,\sigma}^N$ is completely analogous; just replace integrals by sums. Finally, uniqueness of invariant measures holds for any irreducible Markov chain on a countable state space, see Theorem 1.6 of \cite{AndersonMarkovChains}.
\end{proof}

\begin{proof}[Proof of Theorem \ref{MainResultStationary}]
The proof is entirely analogous to the one of Theorem \ref{MainResult}; one uses the $N=1$ case, proven as Proposition 22 in \cite{MicloPatie}:
\begin{align}
\int_{0}^{\infty}k_t^{(\beta)}(x,z)\frac{(\sigma z)^ye^{-\sigma z}}{y!}dz=\sum_{w=0}^{\infty}\frac{(\sigma x)^we^{-\sigma x}}{w!}\mathfrak{k}_t^{(\beta),\sigma}(w,y), \ t>0, x\in \mathbb{R}_+,y \in \mathbb{Z}_+,
\end{align}
and the Andreif identity.
\end{proof}

\section{Connection to the Young bouquet and the z-measures on partitions}\label{ConnectiontoYoungBouquetSection}

This section is independent of the rest of the paper. The aim is to explain how this paper, and in particular Theorem \ref{MainResultStationary}, is related to a series of works by Borodin and Olshanski \cite{BorodinOlshanskiMarkovonPartitions}, \cite{BorodinOlshanskiMeixner}, \cite{BorodinOlshanskiYoungBouquet}, \cite{BorodinOlshanskiThoma}, \cite{OlshanskiLaguerreDiffusions}, \cite{OlshanskiThomaDiffusion}, \cite{OlshanskiRepresentationRing}, \cite{OlshanskiSymmetricFunctions}. We assume that the reader is somewhat familiar with the basics of graded graphs and projective systems, see for example Section 2 of \cite{BorodinOlshanskiYoungBouquet}, that we partially follow, for a nice exposition.

\subsection{The Young bouquet and its boundary}\label{YoungBouquetSubsection}

\subsubsection{The Young graph}
We first introduce the Young graph $\mathbb{Y}$, a distinguished graded graph that is associated to the branching of irreducible representations of the chain of symmetric groups $S(1)\subset S(2) \subset \cdots \subset S(N) \subset S(N+1)\subset \cdots$, see for example \cite{BorodinOlshanskiBook}. 
\begin{defn}
The vertices of the Young graph are given by partitions or equivalently Young diagrams $\mathbb{Y}$ (we use the same notation as for the graph). The $n^{\textnormal{th}}$ level of the graph is given by $\mathbb{Y}_n$, the set of Young diagrams with $n$ boxes (we also write $\mathbb{Y}_0=\emptyset$, the empty diagram, for the root of the graph). Two vertices (diagrams) on consecutive levels are joined by an edge iff they differ by a box. 
\end{defn}

Let $\textnormal{dim}(\lambda)$ denote the number of paths in the Young graph (from the root) ending at vertex $\lambda$ (equivalently the number of standard Young tableaux of shape $\lambda$, see \cite{BorodinOlshanskiYoungBouquet}, \cite{BorodinOlshanskiBook}). Then, we can define the following Markov kernel ${}^{\mathbb{Y}}\Lambda^{m+1}_m$ from $\mathbb{Y}_{m+1}$ to $\mathbb{Y}_m$:
\begin{align*}
{}^{\mathbb{Y}}\Lambda^{m+1}_m\left(\nu, \lambda\right)=\frac{\textnormal{dim}(\lambda)}{\textnormal{dim}(\nu)}\textbf{1}\left(\lambda \subset \nu \right), \ \lambda\in \mathbb{Y}_m, \nu \in \mathbb{Y}_{m+1}.
\end{align*}
Here, $\lambda \subset \nu$ means that the diagram $\lambda$ is included in $\nu$, in this particular case $\nu$ is obtained from $\lambda$ by adding a box. More generally, for $n>m$ we define:
\begin{align*}
{}^{\mathbb{Y}}\Lambda^{n}_m={}^{\mathbb{Y}}\Lambda^{n}_{n-1}{}^{\mathbb{Y}}\Lambda^{n-1}_{n-2}\cdots{}^{\mathbb{Y}}\Lambda^{m+1}_m.
\end{align*}
We say that a sequence of probability measures $\{\mu_n\}_{n=1}^\infty$ on $\{\mathbb{Y}_n \}_{n=1}^{\infty}$ is coherent if:
\begin{align*}
\mu_{m+1}{}^{\mathbb{Y}}\Lambda^{m+1}_m=\mu_m, \ \forall{m}\ge 1.
\end{align*}
Then, the boundary of the Young graph, namely the set of extremal coherent sequences of probability measures on $\mathbb{Y}$, is in bijection with the Thoma simplex $\Omega$ defined as follows (see Section 3 of \cite{BorodinOlshanskiYoungBouquet} for more about this remarkable result):
\begin{defn}
The Thoma simplex $\Omega$ is the subspace of $\mathbb{R}_+^{\infty}\times \mathbb{R}_+^{\infty}$ formed by the couple of sequences $\alpha=(\alpha_i), \beta=(\beta_i)$ such that :
\begin{align*}
\alpha_1\ge \alpha_2 \ge \cdots \ge 0, \beta_1 \ge \beta_2 \ge \cdots \ge 0,\sum_{i=1}^{\infty}\alpha_i+\sum_{i=1}^{\infty}\beta_i \le 1.
\end{align*}
\end{defn}
Moreover, there exist explicit (see Section 3 of \cite{BorodinOlshanskiYoungBouquet}) Markov kernels ${}^{\mathbb{Y}}\Lambda^{\infty}_m$ from $\Omega$ to $\mathbb{Y}_m$ satisfying:
\begin{align*}
{}^{\mathbb{Y}}\Lambda^{\infty}_{m+1}{}^{\mathbb{Y}}\Lambda^{m+1}_m={}^{\mathbb{Y}}\Lambda^{\infty}_{m}, \ \forall m \ge 1.
\end{align*}

\subsubsection{The Young bouquet}
\begin{defn}
The Young bouquet is the poset $\left(\mathbb{YB},<\right)$ defined as follows: $\mathbb{YB}$ is obtained from the direct product $\mathbb{Y} \times \mathbb{R}_+$ by glueing together all points $\left(\nu,0\right)$ to a single point $\left(\emptyset,0\right)$. An element $(\nu,r) \in \mathbb{YB}$ is smaller than $(\nu',r')\in \mathbb{YB}$ if $r<r'$ and $\nu \subset \nu'$. We write $|(\nu,r)|=r$ and call this the level of $(\nu,r)$. 
\end{defn}

Let $\mathbb{YB}_r$ be the subset of elements with level $r$ and consider the stratification $\mathbb{YB}=\underset{r\ge 0}{\sqcup}\mathbb{YB}_r$ (observe also that we can identify each $\mathbb{YB}_r$ with $\mathbb{Y}$). Now, for any pair $r'>r>0$ consider the following Markov kernel ${}^{\mathbb{YB}}\Lambda^{r'}_{r}$ from $\mathbb{YB}_{r'}$ to $\mathbb{YB}_r$:
\begin{align*}
{}^{\mathbb{YB}}\Lambda^{r'}_{r}(\nu,\lambda)=\left(1-\frac{r}{r'}\right)^{|\nu|-|\lambda|}\left(\frac{r}{r'}\right)^{|\lambda|}\frac{|\nu|!}{\left(|\nu|-|\lambda|\right)!|\lambda|!}{}^{\mathbb{Y}}\Lambda^{|\nu|}_{|\lambda|}(\nu,\lambda).
\end{align*}
These Markov kernels satisfy the compatibility relations, see Section 3 of \cite{BorodinOlshanskiYoungBouquet}:
\begin{align*}
{}^{\mathbb{YB}}\Lambda^{r^{''}}_{r'}{}^{\mathbb{YB}}\Lambda^{r'}_{r}={}^{\mathbb{YB}}\Lambda^{r{''}}_{r}, \ r^{''}>r'>r>0,
\end{align*}
and thus the Young bouquet forms a projective system. Its boundary, see Section 3 of \cite{BorodinOlshanskiYoungBouquet}, is in bijection with the Thoma cone $\tilde{\Omega}$ defined as follows:
\begin{defn}
The Thoma cone $\tilde{\Omega}$ is the subspace of $\mathbb{R}_+^{\infty} \times \mathbb{R}_+^{\infty} \times \mathbb{R}_+$, formed by triples $\omega=\left(\alpha,\beta,\delta\right)$ so that $\alpha=(\alpha_i)$ and $\beta=(\beta_i)$ satisfy:
\begin{align*}
\alpha_1\ge \alpha_2 \ge \cdots \ge 0, \beta_1 \ge \beta_2 \ge \cdots \ge 0,\sum_{i=1}^{\infty}\alpha_i+\sum_{i=1}^{\infty}\beta_i \le \delta.
\end{align*}
\end{defn}
 
Clearly, we can identify the Thoma simplex $\Omega$ with the subset of the Thoma cone consisting of $\omega \in \tilde{\Omega}$ with $\delta(\omega)=1$.

Moreover, there exist explicit Markov kernels ${}^{\mathbb{YB}}\Lambda^{\infty}_{r}$ from $\tilde{\Omega}$ to $\mathbb{YB}_r=\mathbb{Y}$ satisfying the compatibility relations (see Section 3 of \cite{BorodinOlshanskiYoungBouquet}):
\begin{align}
{}^{\mathbb{YB}}\Lambda^{\infty}_{r'}{}^{\mathbb{YB}}\Lambda^{r'}_{r}={}^{\mathbb{YB}}\Lambda^{\infty}_{r}, \ r'>r>0.
\end{align}

We require a final definition:
\begin{defn}\label{SubsetThomaCone}
For $x \in W^N_{c,+}$ we define $\omega_x=(\alpha(\omega_x),0,\delta(\omega_x)) \in \tilde{\Omega}$ as follows: 
\begin{align*}
\alpha(\omega_x)=(x_N\ge x_{N-1}\ge \cdots \ge x_2\ge x_1),
\end{align*}
$\alpha_l(\omega_x)=0$ for $l>N$, $\beta_i(\omega_x) \equiv 0$ and $\delta(\omega_x)=\sum_{i=1}^N x_i$. 
\end{defn}

With all these preliminaries in place, the following proposition explains the connection of the Markov kernel $\Lambda_{N,\sigma}$ from $W^N_{c,+}$ to $W^N_{d,+}$ defined in (\ref{MarkovKernel}) with the Young bouquet. 
\begin{prop}\label{RelationBoundaryKernels}
We have that, for $x \in W^N_{c,+}$:
\begin{align*}
{}^{\mathbb{YB}}\Lambda^{\infty}_{r}\left(\omega_x,\cdot\right) \textnormal{ is supported on } \mathbb{Y}(N).
\end{align*}
Moreover, under the bijection between $W^N_{d,+}$ and $\mathbb{Y}(N)$, we have the following equality of probability measures:
\begin{align}
{}^{\mathbb{YB}}\Lambda^{\infty}_{r}\left(\omega_x,\cdot\right)=\Lambda_{N,r}\left(x, \cdot\right), \ x \in W^N_{c,+}.
\end{align}
\end{prop}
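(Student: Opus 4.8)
The plan is to compute the explicit form of the boundary kernel ${}^{\mathbb{YB}}\Lambda^{\infty}_{r}$ evaluated at the special point $\omega_x$ and recognize it, via the bijection $\mathbb{Y}(N)\leftrightarrow W^N_{d,+}$, as the determinantal kernel $\Lambda_{N,r}$. First I would recall from Section 3 of \cite{BorodinOlshanskiYoungBouquet} the explicit formula for ${}^{\mathbb{YB}}\Lambda^{\infty}_{r}(\omega,\lambda)$ for a general $\omega=(\alpha,\beta,\delta)\in\tilde\Omega$ and $\lambda\in\mathbb{Y}$; this is built out of the extended Schur functions (the ``Olshanski'' parametrization) evaluated at $(\alpha,\beta,\delta)$, together with the relevant Poissonization in the level $r$. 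The key simplification is that for $\omega_x$ we have $\beta\equiv 0$ and only finitely many nonzero $\alpha_i$, namely $\alpha=(x_N\ge\cdots\ge x_1)$ and $\delta=\sum_i x_i$. Under this specialization the extended Schur functions collapse to ordinary Schur polynomials in the $N$ variables $x_1,\dots,x_N$: concretely $s_\lambda(x_1,\dots,x_N)=0$ unless $l(\lambda)\le N$, which immediately yields the first assertion that ${}^{\mathbb{YB}}\Lambda^{\infty}_{r}(\omega_x,\cdot)$ is supported on $\mathbb{Y}(N)$.

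Next I would carry out the bookkeeping that turns the Schur-function expression into the bialternant determinant. Writing $s_\lambda(x_1,\dots,x_N)=\det(x_i^{\lambda_j+N-j})_{i,j=1}^N/\Delta_N(x)$ and translating the partition $\lambda$ into its $W^N_{d,+}$ coordinate $y=(\lambda_N,\lambda_{N-1}+1,\dots,\lambda_1+N-1)$, the numerator becomes $\det(x_i^{y_j})$ up to a reordering of rows/columns (and a matching sign), and the combinatorial prefactor coming from the Poissonized level-$r$ weight — which in \cite{BorodinOlshanskiYoungBouquet} is the factor involving $r^{|\lambda|}$, $e^{-r\delta}$-type Laplace-transform normalization, and the $\dim$/$|\lambda|!$ ratio — should reassemble into $\prod_i (rx_i)^{?}e^{-rx_i}/y_j!$ inside the determinant together with the overall $\sigma^{-N(N-1)/2}=r^{-N(N-1)/2}$ factor and the $\Delta_N(y)/\Delta_N(x)$ ratio. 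At this point the right-hand side is literally the defining expression (\ref{MarkovKernel}) for $\Lambda_{N,r}(x,\cdot)$, so the two probability measures agree. A clean way to organize the prefactor computation is to use the known fact that the boundary kernel for the Young bouquet, specialized at a point with finite support and $\beta=0$, coincides with the pushforward under $\mathsf{eval}_N^{-1}$-type identification of the Laguerre-to-Poissonized-Schur link; alternatively one verifies the prefactor by checking normalization — both sides are probability measures and both are manifestly proportional to $\Delta_N(y)\det(x_i^{y_j})e^{-r\sum x_i}\prod_j(y_j!)^{-1}$, and the constant of proportionality is forced.

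The main obstacle I expect is purely notational/translational: matching the conventions of \cite{BorodinOlshanskiYoungBouquet} for ${}^{\mathbb{YB}}\Lambda^{\infty}_{r}$ (which is phrased in terms of symmetric functions on the Thoma cone, Frobenius-type coordinates, and a particular normalization of the level parameter) with the elementary determinantal/eigenvalue language used in this paper. In particular one must be careful that the ``level'' $r$ in the Young bouquet plays exactly the role of the scaling parameter $\sigma$ in $\Lambda_{N,\sigma}$, and that the glueing of all $(\nu,0)$ to $(\emptyset,0)$ together with the $\delta$-coordinate $\sum x_i$ is consistent with the $\sigma\to$ scaling $\Lambda_{N,\sigma}(x,y)=\Lambda_N(\sigma x,y)$. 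Once the dictionary is pinned down, the identity is a one-line consequence of the bialternant formula for Schur polynomials and the Poisson-moment computation already used in the proof of Proposition \ref{FellerKernels}; no genuinely new analytic input is needed.
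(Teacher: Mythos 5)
Your proposal is correct and follows essentially the same route as the paper, which simply cites the explicit formula for ${}^{\mathbb{YB}}\Lambda^{\infty}_{r}$ from Section 3 of \cite{BorodinOlshanskiYoungBouquet} together with the formula for $\textnormal{dim}(\lambda)$; your write-up just unpacks that specialization (vanishing of the extended Schur functions unless $l(\lambda)\le N$, the bialternant formula, and the prefactor bookkeeping, with the normalization argument as a clean way to fix the constant). No gap.
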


\begin{proof}
This is a direct consequence of the explicit formula for ${}^{\mathbb{YB}}\Lambda^{\infty}_{r}\left(\omega_x,\cdot\right)$ from Section $3$ of \cite{BorodinOlshanskiYoungBouquet} along with the explicit formula for $\textnormal{dim}(\lambda)$ from Section 1 of \cite{BorodinOlshanskiMarkovonPartitions}.
\end{proof}

\subsection{The z-measures and the Meixner ensemble}\label{zMeasuresSubsection}
We now define, the celebrated z-measures on partitions, see \cite{BorodinOlshanskiZMeasures},\cite{BorodinOlshanskiMeixner},\cite{BorodinOlshanskiYoungBouquet}. They are a distinguished special case of Okounkov's Schur measures \cite{SchurMeasures}.

Let $\left(z\right)_{\lambda}$ be the generalized Pochhammer symbol:
\begin{align*}
\left(z\right)_{\lambda}=\prod_{i=1}^{l(\lambda)}\left(z-i+1\right)_{\lambda_i}, \ z \in \mathbb{C}, \lambda \in \mathbb{Y},
\end{align*}
where for $n \in \mathbb{N}$, $(z)_n=z(z+1)\cdots(z+n-1)$, $(z)_0=1$.

Consider the following conditions on a pair of parameters $(z,z')$. We call any pair $(z,z')$ satisfying one of the three conditions below admissible.
\begin{itemize}
\item (Principal series) The numbers $z$ and $z'$ are not real and moreover complex conjugate to each other.
\item (Complementary series) Both $z$ and $z'$ are real and contained in an interval $(m,m+1)$ for some $m \in \mathbb{Z}$.
\item (Degenerate series) One of $z,z'$ is a non-zero integer, say $z=N$, while $z'$ has the same sign and $|z'|>|z|-1=N-1$. 
\end{itemize}

\begin{defn}
The z-measure on partitions $M_{z,z'}^r$ with admissible parameters $(z,z')$ and additional parameter $r>0$ is defined as follows:
\begin{align}
M_{z,z'}^r\left(\lambda\right)=\left(1+r\right)^{-zz'}\left(\frac{r}{1+r}\right)^{|\lambda|} \left(z\right)_{\lambda} \left(z'\right)_{\lambda}\left(\frac{\textnormal{dim}(\lambda)}{|\lambda|!}\right)^2, \ \lambda \in \mathbb{Y}.
\end{align}
\end{defn}

A key fact about the z-measures is that they are consistent on $\mathbb{YB}$, see \cite{BorodinOlshanskiYoungBouquet}:
\begin{align*}
M_{z,z'}^{r'}{}^{\mathbb{YB}}\Lambda^{r'}_{r}=M_{z,z'}^{r}, \ \forall r'>r>0
\end{align*}
and thus (see \cite{BorodinOlshanskiYoungBouquet}) give rise to a unique probability measure $M^{\infty}_{z,z'}$ on $\tilde{\Omega}$ so that:
\begin{align*}
M^{\infty}_{z,z'}{}^{\mathbb{YB}}\Lambda^{\infty}_{r}=M^{r}_{z,z'}, \ \forall r>0.
\end{align*}
The z-measure with parameters in the degenerate series $z=N, z'=N+\beta-1$ and $r>0$ coincides (under the bijection between $W^N_{d,+}$ and $\mathbb{Y}(N)$) with the Meixner ensemble $\eta_{\beta,r}^N(\cdot)$, see for example \cite{BorodinOlshanskiMeixner}. We then have the following result:

\begin{prop}\label{BoundaryZmeasure}
The boundary $z$-measure $M_{N,N+\beta-1}^{\infty}$ on the Thoma cone $\tilde{\Omega}$ coincides with the Laguerre ensemble $\nu_{\beta}^N$. More precisely, if we consider $\omega_x=(\alpha(\omega_x),0,\delta(\omega_x)) \in \tilde{\Omega}$ as in Definition \ref{SubsetThomaCone} with $x\in W^N_{c,+}$ picked according to $\nu_{\beta}^N$, then $\omega_x$ has law $M_{N,N+\beta-1}^{\infty}$.
\end{prop}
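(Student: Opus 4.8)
The plan is to pin down $M^{\infty}_{N,N+\beta-1}$ through the defining property of a boundary measure. Recall from the preceding discussion that $M^{\infty}_{z,z'}$ is the \emph{unique} probability measure on $\tilde{\Omega}$ with $M^{\infty}_{z,z'}\,{}^{\mathbb{YB}}\Lambda^{\infty}_{r}=M^{r}_{z,z'}$ for all $r>0$, and that for the (admissible) degenerate-series parameters $z=N$, $z'=N+\beta-1$ the measure $M^{r}_{N,N+\beta-1}$ equals the Meixner ensemble $\eta^{N}_{\beta,r}$ under the bijection $W^N_{d,+}\cong\mathbb{Y}(N)$. So it is enough to show that the pushforward $\mu:=(\omega_{\bullet})_{*}\nu_{\beta}^{N}$ of the Laguerre ensemble under the map $x\mapsto\omega_x$ of Definition \ref{SubsetThomaCone} satisfies $\mu\,{}^{\mathbb{YB}}\Lambda^{\infty}_{r}=\eta^{N}_{\beta,r}$ for every $r>0$; uniqueness then yields $\mu=M^{\infty}_{N,N+\beta-1}$, which is precisely the assertion.

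To carry this out, I would first observe that $\nu_{\beta}^{N}$ is supported on $\mathring{W}^{N}_{c,+}\subset W^{N}_{c,+}$, so $\mu$ is a well-defined probability measure supported on $\{\omega_x:x\in W^{N}_{c,+}\}\subset\tilde{\Omega}$. Next, Proposition \ref{RelationBoundaryKernels} gives ${}^{\mathbb{YB}}\Lambda^{\infty}_{r}(\omega_x,\cdot)=\Lambda_{N,r}(x,\cdot)$ for all $x\in W^{N}_{c,+}$; feeding this into the definition of the pushforward and interchanging integrals (legitimate because every kernel in sight is Markov, so the integrand is nonnegative and integrates to $1$) gives $\mu\,{}^{\mathbb{YB}}\Lambda^{\infty}_{r}=\nu_{\beta}^{N}\Lambda_{N,r}$. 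Finally, Proposition \ref{ExactRelationLaguerreMeixner} identifies $\nu_{\beta}^{N}\Lambda_{N,r}=\eta^{N}_{\beta,r}$, which completes the verification and hence the proof.

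The step I expect to be the main obstacle is not a computational one but the proper invocation of the boundary uniqueness: one needs that a probability measure on $\tilde{\Omega}$ is determined by the coherent family $\{M\,{}^{\mathbb{YB}}\Lambda^{\infty}_{r}\}_{r>0}$, equivalently that the kernels ${}^{\mathbb{YB}}\Lambda^{\infty}_{r}$ separate points of the space of probability measures on $\tilde{\Omega}$. This is part of the boundary theory of the Young bouquet established in Section 3 of \cite{BorodinOlshanskiYoungBouquet}, and I would simply cite it; the remaining ingredients (the support statement, the measurability of $x\mapsto{}^{\mathbb{YB}}\Lambda^{\infty}_{r}(\omega_x,\cdot)$, and the Fubini interchange) are routine.
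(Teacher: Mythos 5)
Your proposal is correct and follows essentially the same route as the paper: the paper's proof is precisely the combination of Proposition \ref{RelationBoundaryKernels}, Proposition \ref{ExactRelationLaguerreMeixner}, the identification of the degenerate-series z-measure $M^{r}_{N,N+\beta-1}$ with the Meixner ensemble $\eta^{N}_{\beta,r}$, and the uniqueness of the boundary measure determined by a coherent family from Section 3 of \cite{BorodinOlshanskiYoungBouquet}. You have simply written out the details that the paper leaves implicit, and you correctly flag the boundary-uniqueness statement as the one genuinely non-routine ingredient to be cited.
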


\begin{proof}
This is a consequence of Proposition \ref{ExactRelationLaguerreMeixner} and Proposition \ref{RelationBoundaryKernels} along with the discussion above.
\end{proof}
\subsection{Markov processes for z-measures}\label{MarkovProcessesonBoundarySubsection}
A construction of a Markov process on $\tilde{\Omega}$, possessing the Feller property with additional desirable features including a determinantal structure, that preserves the non-degenerate z-measures was obtained in \cite{BorodinOlshanskiThoma}, see also \cite{BorodinOlshanskiMarkovonPartitions},\cite{BorodinOlshanskiDiffusionsPartitions}, \cite{OlshanskiLaguerreDiffusions} for previous studies. The strategy follows the method of intertwiners of Borodin and Olshanski introduced in \cite{BorodinOlshanski}, for constructing Feller processes on boundaries of projective systems, see \cite{BorodinOlshanski},\cite{Cuenca},\cite{HuaPickrell} for applications of this method. 

The statement of the result in \cite{BorodinOlshanskiThoma} goes as follows: For non-degenerate parameters $(z,z')$ there exists a unique Feller-Markov semigroup $\left(T^{\infty}_{z,z'}(t)\right)_{t\ge 0}$ on $\tilde{\Omega}$, with $M_{z,z'}^{\infty}$ as its unique invariant measure, satisfying (and in fact characterized through) the intertwining relation:
\begin{align}\label{nondegenerateintertwining}
T^{\infty}_{z,z'}(t){}^{\mathbb{YB}}\Lambda^{\infty}_{r}={}^{\mathbb{YB}}\Lambda^{\infty}_{r}T^{r}_{z,z'}(t), \ \forall t \ge 0, r>0,
\end{align}
where $\left(T^{r}_{z,z'}(t)\right)_{t\ge 0}$ is the semigroup of a certain Markov jump process on $\mathbb{Y}$ (see \cite{BorodinOlshanskiThoma} for its definition), which has $M_{z,z'}^r$ as its unique invariant probability measure. 

Then, the authors go on to identify the generator $\mathfrak{D}^{\infty}_{z,z'}$ of the abstract semigroup $\left(T^{\infty}_{z,z'}(t)\right)_{t\ge 0}$ by its action on a certain algebra of functions on $\tilde{\Omega}$, see \cite{BorodinOlshanskiThoma}, and in a subsequent paper \cite{OlshanskiThomaDiffusion} it is shown that $\left(T^{\infty}_{z,z'}(t)\right)_{t\ge 0}$ gives rise to a Markov process with continuous sample paths. In all these works, heavy use is made of symmetric function theory. The key role is played by the Laguerre and Meixner symmetric functions introduced and studied by Olshanski in \cite{OlshanskiSymmetricFunctions}.

In fact, due to Propositions \ref{RelationBoundaryKernels} and \ref{BoundaryZmeasure} above we can interpret Theorem \ref{MainResultStationary} in this paper as the analogue of (\ref{nondegenerateintertwining}) for the degenerate series of parameters $(z,z')$, thus completing the picture for the whole range of admissible parameter values.

Finally, we should mention that an intertwining relation between a diffusion generator and that of a Markov jump process is proven in Section 9 of \cite{OlshanskiRepresentationRing}. The motivation behind this study is the analogous problem of constructing dynamics for the zw-measures on the Gelfand-Tsetlin graph \cite{BorodinOlshanskiBoundary}, \cite{BorodinOlshanski}. Again, heavy use is made of symmetric functions and a key role is played by the Jacobi and Hahn orthogonal polynomials.

We finish with a number of remarks.

\begin{rmk}
It would be interesting to understand whether the intertwining (\ref{MainIntertwining2}), going in the opposite direction, has any meaning as well in this framework of consistent dynamics on projective systems.
\end{rmk}

\begin{rmk}
It would also be interesting to see whether it is possible to obtain the results for the non-degenerate case from the ones for the degenerate one, by some kind of analytic continuation, as was done in \cite{BorodinOlshanskiMarkovonPartitions}, \cite{OlshanskiSymmetricFunctions}.
\end{rmk}

\begin{rmk}
Theorem \ref{MainResultStationary} can also be used to obtain relations between the multivariate Meixner and Laguerre polynomials as in \cite{BorodinOlshanskiThoma}, \cite{OlshanskiSymmetricFunctions}. Observe that, this is going in the opposite direction of the arguments in \cite{BorodinOlshanskiThoma} which go from information on the symmetric functions to obtain results for the Markov semigroups.
\end{rmk}

\bigskip
\noindent
{\sc School of Mathematics, University of Bristol, U.K.}\newline
\href{mailto:T.Assiotis@bristol.ac.uk}{\small T.Assiotis@bristol.ac.uk}

\end{document}